\newcommand{\R}{\mathrm{I\!R}}
\DeclareMathOperator{\diver}{div}
\DeclareMathOperator{\dist}{dist}
\newtheorem{definition}{Definition}[section]
\newtheorem{lemma}[definition]{Lemma}
\newtheorem{proposition}[definition]{Proposition}
\newtheorem{theorem}[definition]{Theorem}
\title[Singular problems with unbounded convection term]{Positive solutions of singular elliptic problems with unbounded exponents and unbounded convection term}
\author{Anderson L. A. de Araujo}\thanks{First author was supported by FAPEMIG  APQ-02375-21, RED-00133-21 and CNPq.}
\address[Anderson L.A. de Araujo]{Departamento de Matem\'atica, Universidade Federal de Vi\-\c{c}osa, 36570-900, Vi\c{c}osa, MG, Brazil}
\email{anderson.araujo@ufv.br}
\author{Hamilton P. Bueno}
\address[Hamilton P. Bueno]{Departamento de Matem\'atica, Universidade Federal de Minas Ge\-rais, 31270-901 - Belo Horizonte - MG, Brazil}
\email{hamilton@mat.ufmg.br}
\author{Kamila F. L. Madalena}
\address[Kamila F. L. Madalena]{Departamento de Matem\'atica, Universidade Federal de Minas Ge\-rais, 31270-901 - Belo Horizonte - MG, Brazil}
\email{kamilalobo@ufmg.br}
\begin{document}
	
\subjclass{35J92, 35B09, 35B33, 35J25, 35J62}

\keywords{quasilinear elliptic equation, positive solution, convection growth, supercritical growth}.

\date{}

\begin{abstract} In this paper, we study the existence of a solution for a class of Dirichlet problems with a singularity and a convection term. Precisely, we consider the existence of a positive solution to the Dirichlet problem 
$$-\Delta_p u = \frac{\lambda}{u^{\alpha}} + f(x,u,\nabla u)$$
in a bounded, smooth domain $\Omega$. The convection term has exponents with no upper limitations neither in $u$ nor in $\nabla u$. This is somewhat unexpected and rare. So, we address a wide range of problems not yet contained in the literature.

The solution of the problem combines the definition of an auxiliary problem, the method of sub- and super-solution and Schauder's fixed point theorem. 
\end{abstract}

\maketitle

\section{Introduction}
In this paper, we shall establish results on the existence of positive solutions to the following $p$-Laplacian problem with a singular nonlinearity and a nonlinear convection term.
\begin{equation} \label{ps}
\left\{
\begin{array}{rcll}
-\Delta_p u & =& \frac{\lambda}{u^{\alpha}} + f(x,u,\nabla u) &\textrm{in }\ \Omega\\
u & >& 0 &\textrm{in }\ \Omega\\
u & = &0 &\textrm{on }\  \partial\Omega,
\end{array}
\right.
\end{equation}
where $-\Delta_{p} u=-\diver\left(|\nabla u|^{p-2} \nabla u\right)$ is the $p$-Laplacian operator for $1<p<\infty$, $\Omega \subset \mathbb{R}^N$ ($N\geq 2$) is a bounded, smooth domain, $0<\alpha<1$, and $f\colon \Omega \times \mathbb{R} \times \mathbb{R}^N \to \mathbb{R}$ is a continuous nonlinearity satisfying 
\begin{equation} \label{f}
    0 \leq f(x,t, \xi) \leq a|t|^{r_1}+b|\xi|^{r_2} \quad a,b>0,
\end{equation}
where $\lambda> 0$ is a parameter and $r_1,r_2 \in (0,p-1)\cup (p-1,\infty)$. 

Note that right-hand side of equation \eqref{ps} presents two main difficulties. One is the presence of the singular term, which makes it unbounded, and therefore we cannot directly use the techniques applied in Bueno and Ercole \cite{Bueno-G}, see a brief review of this paper in the sequel. Another difficulty is that the exponents $r_1$ of $u$ and $r_2$ of $\nabla u$ are allowed to be supercritical and imposes severe restrictions on the use of techniques due to the lack of embedding of Sobolev spaces into suitable spaces.

In the past years, singular elliptic equations have been challenging mathematicians. An extensive literature is devoted to such problems with singularity, specially from the point of view of theoretical analysis. The following problem has been studied in many papers
\begin{equation}\label{sing}
\left\{
\begin{array}{rcll}
-\Delta_p u & = &\eta(x)u^{-\alpha}  &\textrm{in }\ \Omega\\
u & > &0 &\textrm{in }\ \Omega\\
u & =& 0 &\textrm{on }\ \partial\Omega,
\end{array}
\right.
\end{equation}
where $\eta(x)\geq 0$ in $\Omega$ and $0<\alpha<1$. In \cite{Canino_Sciunzi_Trombetta}, Canino, Sciunzi and  Trombetta showed the existence and uniqueness of solutions to \eqref{sing} when $p\neq 2$ in many configurations. 

In \cite{Giacomoni}, Giacomoni, Schindler and Tak\'a\v{c} studied the existence and multiplicity of weak solutions to the $p$-Laplacian problem 
\begin{equation*} 
\left\{
\begin{array}{rcll}
-\Delta_p u & =& \lambda u^{-\delta} + u^s  &\textrm{in }\ \Omega\\
u & > &0 &\textrm{in }\ \Omega\\
u & =& 0 &\textrm{on }\ \partial\Omega,
\end{array}
\right.
\end{equation*}
where $1<p<\infty, p-1<$ $s \leq p^*-1, \lambda>0$, and $0<\delta<1$. As usual, $p^*=\frac{N p}{N-p}$ if $1<p<$ $N, p^* \in(p, \infty)$ is arbitrarily large if $p=N$, and $p^*=\infty$ if $p>N$.  

Also, singular problems driven by more general operators are also in the literature. For example, 
\begin{equation*} 
	\left\{
	\begin{array}{rcll}
-\diver\left(a(|\nabla u|^p)|\nabla u|^{p-2}\nabla u\right)&=&u^{-\alpha}+u^{\beta} &\textrm{in }\ \Omega\\
u & > &0 &\textrm{in }\ \Omega\\
u & =& 0 &\textrm{on }\ \partial\Omega,
\end{array}
\right.
\end{equation*}
was considered by Corrêa, Corrêa and Figueiredo in \cite{Giovany} for a bounded, smooth domain $\Omega\subset\mathbb{R}^N$ ($N\geq 3$), exponents $0<\alpha,\beta<p-1$ and a quite general function $a\colon\mathbb{R}^+\to\mathbb{R}^+$, which allows the authors to consider a wide range of problems, including the $p$-Laplacian operator $\Delta_p$ and the $(p,q)$-Laplacian among others.

More details on topics related to singular problems without convection terms can be found in \cite{BMZ2019,BPZ2022,Giovany, GN2012, PRW2023} and references therein. 

Elliptic problems with convection terms also have been considered in various frameworks. In \cite{Bueno-G}, using the method of sub- and super-solutions combined with a global estimate on the gradient, Bueno and Ercole prove the existence of at least one positive solution for the problem
\begin{equation*} 
\left\{
\begin{array}{rcll}
-\Delta_p u & =&  \beta f(x,u,\nabla u) +\lambda h(x,u) & \textrm{in }\ \Omega\\
u & =& 0 &\textrm{on }\ \partial\Omega,
\end{array}
\right.
\end{equation*}
where $h,f$ are continuous nonlinearities satisfying $0\leq \omega_1(x) u^{q-1} \leq h(x,u) \leq \omega_2(x) u^{q-1}$ with $1<q<p$ and $0 \leq f(x,u, \nabla u) \leq \omega_3(x) u^{a}|v|^b$ with $a,b>0$ and the functions $ \omega_i$, $1\leq i \leq 3$, are positive, continuous weights in $\bar{\Omega}$. It is important to note that the results presented in \cite{Bueno-G} are rare in the literature when the exponents of $u$ and $|\nabla u|$ are greater than $p-1$.

In \cite{fmm}, Faria, Miyagaki and Motreanu proved existence of a positive solution for the following quasi-linear elliptic problem involving the $(p,q)$-Laplacian and a convection term by aplying the Galerkin's method and a Schauder basis.
\begin{equation*} 
\left\{
\begin{array}{rcll}
-\Delta_p u -\mu\Delta_q u & = &  f(x,u,\nabla u)  & \textrm{in }\ \Omega\\
u&>& 0  &\textrm{in }\ \Omega\\
u & =& 0 &\textrm{on }\ \partial\Omega,
\end{array}
\right.
\end{equation*}

In \cite{Motreanu}, Liu, Motreanu, and Zeng, using the method of sub- and super-solution, truncation techniques, nonlinear regularity theory, Leray-Schauder alternative principle, and set-valued analysis, established a result of existence of a positive solution to the problem
\begin{equation*}
	\left\{
	\begin{array}{rcll}
		-\Delta_p u & = & f(x,u(x),\nabla u(x)) +g(x,u(x)) & \textrm{in }\ \Omega\\
		u & > &0 &\textrm{in }\ \Omega\\
		u & =& 0 &\textup{on }\ \partial\Omega,
	\end{array}
	\right.
\end{equation*}
where $f\colon \Omega \times \mathbb{R} \times \mathbb{R}^N \rightarrow \mathbb{R}$ satisfies a suitable growth condition and the semilinear function $g\colon \Omega \times (0, \infty) \rightarrow \mathbb{R}$ is singular at $s=0$, that is,
$\displaystyle\lim _{s \rightarrow 0^{+}} g(x, s)=+\infty$. It is noteworthy that the conditions \eqref{f} in this paper are significantly less restrictive than those on $f$ in \cite{Motreanu}.

We state the main results of the paper. First we consider the case $0<r_1,r_2<p-1$.
\begin{theorem}\label{main result2}
  Suppose $f$ is a continuous function satisfying \eqref{f} such that $r_1, r_2 < p-1$. Then the problem \eqref{ps} has at least one positive solution $u \in W^{1,p}_0(\Omega)$ for each $\lambda>0$.  
\end{theorem}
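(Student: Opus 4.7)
The plan is to run the three-ingredient scheme announced in the abstract: produce an ordered pair of sub- and super-solutions, set up an auxiliary problem in which only the convection term is frozen, and pull out a fixed point of the associated solution map by Schauder's theorem.

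For the sub-solution I would take $\underline u\in W_0^{1,p}(\Omega)\cap C^{1,\gamma}(\bar\Omega)$, the unique positive solution of the purely singular problem $-\Delta_p\underline u=\lambda/\underline u^{\alpha}$ with zero boundary data, whose existence, $C^{1,\gamma}$ regularity and lower bound $\underline u(x)\ge c\,\dist(x,\partial\Omega)$ are available from \cite{Canino_Sciunzi_Trombetta}; since $f\ge 0$, this is automatically a sub-solution of \eqref{ps}. For the super-solution I would try the ansatz $\bar u=M\underline u$ with $M\ge 1$: the identity $-\Delta_p\bar u=M^{p-1}\lambda/\underline u^{\alpha}$, together with the boundedness of $\underline u$ and $|\nabla\underline u|$ on $\bar\Omega$, reduces the required super-solution inequality for \eqref{ps} (after multiplying by $\underline u^{\alpha}/\lambda$) to
\begin{equation*}
M^{p-1}\ \ge\ M^{-\alpha}\,+\,C_1 M^{r_1}\,+\,C_2 M^{r_2},
\end{equation*}
which holds for $M$ sufficiently large since $r_1,r_2<p-1$. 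Hence $(\underline u,M\underline u)$ is an ordered sub-/super-solution pair.

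With this pair at hand, I would introduce the closed convex set
\begin{equation*}
K=\bigl\{v\in C^{1}(\bar\Omega):\ \underline u\le v\le\bar u\text{ in }\Omega,\ \|\nabla v\|_{\infty}\le R\bigr\},
\end{equation*}
for a constant $R$ to be fixed, and the operator $T\colon K\to W_0^{1,p}(\Omega)$ that sends $v\in K$ to the unique positive solution $u=Tv$ of the auxiliary problem
\begin{equation*}
-\Delta_p u=\frac{\lambda}{u^{\alpha}}+f(x,v,\nabla v)\ \ \text{in }\Omega,\qquad u=0\ \ \text{on }\partial\Omega.
\end{equation*}
For $v\in K$ the perturbation $f(\cdot,v,\nabla v)$ is dominated by the constant $a\|\bar u\|_{\infty}^{r_1}+bR^{r_2}$, so existence of $u$ together with the sandwich $\underline u\le u\le\bar u$ follows from the standard sub-/super-solution theory for singular $p$-Laplacian problems with a bounded source, using the same pair $(\underline u,M\underline u)$ after enlarging $M$ to absorb the extra $R^{r_2}$ term; this is possible thanks to $r_1,r_2<p-1$. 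Nonlinear regularity for singular $p$-Laplacian equations then yields $Tv\in C^{1,\gamma}(\bar\Omega)$ together with a uniform bound depending only on $M$ and $R$. Fixing $R$ large enough to dominate this bound makes $T(K)\subset K$; continuity of $T$ follows from the continuity of $v\mapsto f(\cdot,v,\nabla v)$ into $L^{\infty}(\Omega)$ combined with the continuity of the singular $p$-Laplacian solution operator, and compactness of $T$ comes from the $C^{1,\gamma}$ estimate via Ascoli--Arzel\`a. Schauder's fixed point theorem then produces a fixed point of $T$, which is a positive weak solution of \eqref{ps} in $W_0^{1,p}(\Omega)$.

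The main obstacle I foresee is closing the loop between the super-solution constant $M$, the a priori cap $R$ on $|\nabla v|$, and the $C^{1,\gamma}$ regularity constant of the auxiliary problem: these three must be arranged so that the regularity estimate of $Tv$ does not exceed $R$ and so that $(\underline u,M\underline u)$ remains an ordered pair for the auxiliary problem after absorbing $bR^{r_2}$. This is precisely where the hypothesis $r_1,r_2<p-1$ is used in an essential way, since it forces the required $M$ to grow only sublinearly in $R$ (essentially as $R^{r_2/(p-1)}$), compatibly with the regularity estimate.
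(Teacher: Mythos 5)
Your scheme is not the one the paper uses for this theorem, and as written it has a genuine gap at the point you yourself flag as the ``main obstacle'': closing the loop between the gradient cap $R$ on the set $K$ and the gradient bound of $Tv$. Because you freeze the convection term, you must prove a uniform estimate $\|\nabla Tv\|_{\infty}\le R$ for solutions of the singular frozen problem $-\Delta_p u=\lambda u^{-\alpha}+f(x,v,\nabla v)$, and under the hypotheses of Theorem \ref{main result2} no such quantitative estimate is available. The paper's explicit gradient estimates (Propositions \ref{Prop1} and \ref{Prop2}) require $\partial\Omega\in W^2L^{N-1,1}$ (i.e.\ $C^{1,1}$ or convexity) \emph{and} that the right-hand side lie in $L^q$ with $q>N$, which for the singular part $\lambda u^{-\alpha}\sim d_\Omega^{-\alpha}$ forces $\alpha q<1$, i.e.\ $\alpha<1/N$ --- exactly the extra hypotheses of Theorem \ref{main result1} and absent from Theorem \ref{main result2}, which allows any $0<\alpha<1$ and any bounded smooth $\Omega$. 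The alternative tool, Proposition \ref{giacomoni}, does give a $C^{1,\beta}$ bound $\Gamma$ for singular right-hand sides, but its dependence on the constant $c$ in the bound $0\le g\le c\,d_\Omega^{-\delta}$ is not quantified, so your claim that the required $M$ and the regularity constant grow ``essentially as $R^{r_2/(p-1)}$'' is an unproved scaling heuristic, not a step you can execute with the results at hand. (The singular term also breaks exact $p$-homogeneity, so the naive rescaling argument needs care.) In short, your route would at best reprove a version of Theorem \ref{main result1} with subcritical exponents, not Theorem \ref{main result2} in its stated generality; the sub-/super-solution construction itself ($\underline u$ from \cite{Canino_Sciunzi_Trombetta} and $\bar u=M\underline u$ with $M^{p-1}\ge M^{-\alpha}+C_1M^{r_1}+C_2M^{r_2}$) is fine, but it is not the bottleneck.

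The paper avoids this issue entirely by \emph{not} freezing the gradient. It regularizes only the singularity, replacing $\lambda u^{-\alpha}$ by $\lambda(|u|+\varepsilon)^{-\alpha}$ in the auxiliary problem \eqref{P_n}, and solves that problem by the Galerkin method with a Schauder basis of $W_0^{1,p}(\Omega)$ together with the Brouwer-type Lemma \ref{prop1}; the hypothesis $r_1,r_2<p-1$ enters only to make the finite-dimensional map coercive, $\langle F(\xi),\xi\rangle\ge \|u\|^p-\lambda c_1\|u\|^{1-\alpha}-c_2\|u\|^{r_1+1}-c_3\|u\|^{r_2+1}$, which needs no pointwise gradient control whatsoever. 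Passing to the limit $\varepsilon=1/n\to0$, the comparison principle (Lemma \ref{Cuesta_Takac}) against $w=\lambda^{1/(p-1-\alpha)}u_0$, with $u_0$ the solution of \eqref{P2}, gives the uniform lower bound $u_n+\tfrac1n\ge k_1 d_\Omega$, and dominated convergence handles the singular term. If you want to salvage your approach for this theorem, you would need to either supply a quantitative gradient estimate for the singular frozen problem valid for all $0<\alpha<1$ on a merely smooth domain, or abandon the freezing of $\nabla u$ as the paper does.
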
\vspace*{.2cm}

In order to handle the case $p-1<r_1,r_2$ we need to assume a stronger assumption on the regularity of $\Omega$: either $\partial\Omega\in C^{1,1}$ or $\Omega$ convex and $\partial\Omega\in C^{1,\tau}$, $0<\tau<1$. In either case we are allowed to apply Propositions \ref{Prop1} and \ref{Prop2} (stated in Section \ref{preliminares}), which demand $\partial\Omega\in W^2 L^{N-1, 1}$, where $W^2 L^{N-1, 1}$ denotes the Lorentz-Sobolev space. As remarked by Cianchi and Maz'ya in \cite{Cianchi_Mazya}, the assumption $\partial \Omega \in W^2 L^{N-1,1}$ means that $\Omega$ is locally the subgraph of a function of $N-1$ variables whose second-order weak derivatives belong to the Lorentz space $L^{N-1,1}$ (see Section \ref{preliminares} for basic results about this space) and $\partial \Omega \in C^{1,0}$. 
	
So, with stronger hypotheses on $\partial\Omega$, we obtain the next result.
\begin{theorem}\label{main result1}
Let $\Omega \subset \mathbb{R}^N$ be a bounded domain such that $\partial \Omega \in C^{1,1}$ and $q>\max\{N, p'\}$, where $p'=\frac{p}{p-1}$. Suppose that $f$ is a continuous function satisfying \eqref{f}, $0<\alpha q<1$ and $r_1,r_2 > p-1$. Then there is a constant $A^*>0$ such that, for each $\lambda \in (0, A^*)$, there is $M:=M(\lambda)$ such that the problem \eqref{ps} has at least one positive solution $u_\lambda$ satisfying 
\begin{align*}
\lambda u_0 \leq u_\lambda \leq Mu_0\quad \text{ and } \quad \Vert \nabla u_\lambda \Vert_{\infty} \leq  M.
\end{align*}
\end{theorem}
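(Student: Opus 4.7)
The plan is to obtain $u_\lambda$ as a Schauder fixed point of a map that freezes the convection term in \eqref{ps} and solves the resulting singular $p$-Laplacian problem, while using the global $L^\infty$-gradient estimate provided by Propositions \ref{Prop1} and \ref{Prop2} to absorb the supercritical growth of $f$. The hypothesis $\partial\Omega\in C^{1,1}$ is imposed exactly to enable those estimates, and the constraint $\alpha q<1$ is calibrated so that the singular source $u^{-\alpha}$ lies in $L^q(\Omega)$ when evaluated at a function comparable to the natural reference $u_0$ (whose boundary decay is known).

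First I would let $u_0\in W^{1,p}_0(\Omega)\cap C^1(\bar\Omega)$ be the unique positive solution of the purely singular model $-\Delta_p u_0 = u_0^{-\alpha}$ with $u_0=0$ on $\partial\Omega$ (cf.\ \cite{Canino_Sciunzi_Trombetta}). Using the $(p-1)$-homogeneity $-\Delta_p(cu_0)=c^{p-1}u_0^{-\alpha}$, the functions $\lambda u_0$ and $Mu_0$ will serve as sub- and super-solution candidates. Then I would consider the convex set
\[
K = \bigl\{\, v\in C^1_0(\bar\Omega) : \lambda u_0 \leq v \leq M u_0 \text{ in } \Omega,\ \|\nabla v\|_\infty \leq M \,\bigr\}
\]
and define $T(v)=u$ as the unique positive solution of the frozen auxiliary problem
\[
-\Delta_p u = \frac{\lambda}{u^\alpha} + f(x,v,\nabla v) \text{ in }\Omega,\qquad u=0 \text{ on }\partial\Omega.
\]
Since $f(x,v,\nabla v)\in L^\infty(\Omega)$ for $v\in K$, standard theory for singular $p$-Laplacian equations yields existence, uniqueness and the comparison principle needed below.

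The key obstacle is to show $T(K)\subset K$, and the balance of the relevant inequalities is what forces the definitions of $A^*$ and $M=M(\lambda)$. The lower bound $T(v)\geq \lambda u_0$ is a direct consequence of weak comparison, using $f\geq 0$ and a mild smallness condition on $\lambda$. The upper bound $T(v)\leq M u_0$ reduces, after applying \eqref{f} and comparison, to a scalar inequality of the form
\[
M^{p-1}\,u_0^{-\alpha} \,\geq\, \lambda\,(Mu_0)^{-\alpha} + a\,\|u_0\|_\infty^{r_1}\,M^{r_1} + b\,M^{r_2},
\]
which, because $r_1,r_2>p-1$, is solvable in $M$ only when $\lambda$ lies below an explicit threshold; the supremum of that $\lambda$-interval defines $A^*$. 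The gradient bound $\|\nabla T(v)\|_\infty\leq M$ will follow from Propositions \ref{Prop1} and \ref{Prop2}, which give
\[
\|\nabla u\|_\infty \,\leq\, C\,\bigl\|\lambda u^{-\alpha} + f(x,v,\nabla v)\bigr\|_{L^q(\Omega)},
\]
where $\lambda u^{-\alpha}\leq \lambda^{1-\alpha}u_0^{-\alpha}\in L^q(\Omega)$ thanks to $\alpha q<1$, and the convection contribution is $L^\infty$-controlled on $K$; a possible further shrinkage of $\lambda$ and readjustment of $M$ then closes the estimate by $M$.

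Finally, to apply Schauder's fixed point theorem I would verify that $T\colon K\to K$ is continuous and that $T(K)$ is relatively compact in $C^1(\bar\Omega)$. Continuity follows from stability of solutions of the auxiliary singular problem under $L^\infty$-perturbations of the source, while relative compactness follows from the $C^{1,\beta}$-regularity of solutions to the $p$-Laplacian with bounded right-hand side, combined with Arzel\`a--Ascoli. The fixed point $u_\lambda=T(u_\lambda)\in K$ is then a positive solution of \eqref{ps} that automatically satisfies both required inequalities. The principal difficulty throughout is the \emph{joint calibration} of $M$ and $A^*$ so that the scalar super-solution inequality and the gradient estimate can be satisfied simultaneously with the same constant $M$; this is precisely where the supercritical exponents $r_1,r_2>p-1$ constrain the admissible range of $\lambda$.
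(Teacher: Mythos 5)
Your overall strategy coincides with the paper's: an invariant convex set pinned between $\lambda u_0$ and $Mu_0$ with a uniform gradient bound $M$, a frozen auxiliary problem defining a map $T$, calibration of $M$ and $A^*$ from the resulting scalar inequalities, the Cianchi--Maz'ya/Ercole estimates (Propositions \ref{Prop1} and \ref{Prop2}) to close the gradient bound, and Schauder's fixed point theorem. The one structural difference is the auxiliary problem: you freeze only the convection term and keep $\lambda/u^{\alpha}$ in the unknown, whereas the paper freezes the entire right-hand side and solves $-\Delta_p u=\lambda v^{-\alpha}+f(x,v,\nabla v)$. Because $v\geq\lambda u_0\geq c\lambda\, d_\Omega$ and $0<\alpha q<1$ put $\lambda v^{-\alpha}$ in $L^q\subset L^{p'}$, the paper's auxiliary problem is an ordinary Dirichlet problem $-\Delta_pu=g$ with $g\in L^{p'}$, so existence, uniqueness and continuity of $v\mapsto u$ come for free from monotone operator theory (\cite[Theorem 8]{Dinca}). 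Your version instead requires an existence--uniqueness--comparison--stability package for the singular equation $-\Delta_p u=\lambda u^{-\alpha}+g(x)$; this is available in the literature but is a genuinely heavier input, and your continuity claim (``stability under $L^\infty$-perturbations of the source'') would need to be proved, not just asserted. Note also a calibration subtlety in your lower bound: comparing $-\Delta_pu\geq\lambda u^{-\alpha}$ with $-\Delta_p(\lambda u_0)=\lambda^{p-1}u_0^{-\alpha}$ via Lemma \ref{Cuesta_Takac} requires $\lambda^{p-1}\leq\lambda^{1-\alpha}$, which fails for small $\lambda$ when $p<2-\alpha$; the paper avoids this by routing the lower bound through $v\geq\lambda u_0$ and condition $(iii)$ of Proposition \ref{lem3}, $M\leq\lambda^{(2-p)/\alpha}$. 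Your argument still gives $u\geq\lambda^{1/(p-1+\alpha)}u_0>0$, but to obtain exactly the stated barrier $\lambda u_0$ you would need to adjust.

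The one step that would fail as written is the compactness argument: you invoke ``$C^{1,\beta}$-regularity of solutions to the $p$-Laplacian with bounded right-hand side,'' but the right-hand side of your frozen problem is \emph{not} bounded --- it contains $\lambda u^{-\alpha}$, which blows up like $d_\Omega^{-\alpha}$ at $\partial\Omega$ (the same is true of the paper's frozen problem). The correct tool is the Giacomoni--Schindler--Tak\'a\v{c} regularity result (Proposition \ref{giacomoni}), which yields a uniform $C^{1,\beta}(\bar\Omega)$ bound for right-hand sides dominated by $c\,d_\Omega^{-\delta}$ together with the linear growth $u\leq MK_\lambda d_\Omega$ from Lemma \ref{lemma_Gia2}. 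Your own estimate $\lambda u^{-\alpha}\leq\lambda^{1-\alpha}u_0^{-\alpha}\leq C d_\Omega^{-\alpha}$ supplies exactly the needed hypothesis, so the gap is repairable, but as stated the cited regularity theory does not apply.
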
\vspace*{.2cm}

As a consequence of this result, we can obtain that, $u_{\lambda} \to 0$ and $|\nabla u_{\lambda}| \to 0$, uniformly in $\Omega$, as $\lambda \to 0^+$, see the last part of the proof of Theorem \ref{main result1}.

We now briefly describe the plan of the paper. We collect a few results on the $p$-Laplacian operator in Section \ref{preliminares}. In Section \ref{auxiliary problem} we define an auxiliary problem, which will be useful in the proof of Theorem \ref{main result2}. The existence of a positive solution to the auxiliary problem is obtained in Lemma \ref{teo aux} by using the Galerkin method and a Schauder basis in $W^{1,p}_0(\Omega)$ combined with a consequence of Brouwer's fixed point theorem. The proof of Theorem \ref{main result2} is then obtained by showing that the solution of the auxiliary problem converges to the solution of problem \eqref{ps}, using that $W^{1,p}_0(\Omega)$ is a reflexive space and the embedding $W^{1,p}_0(\Omega) \hookrightarrow L^p(\Omega)$. The comparison principle (see Lemma \ref{Cuesta_Takac}) and the solution $u_0$ of problem \eqref{P2} (see below) guarantee the positivity of the solution of problem \eqref{ps}.
	
The proof of Theorem \ref{main result1} is presented in Section \ref{teorema1} by applying Schauder's fixed point theorem. The main tools are Proposition \ref{lem3}, where we show the existence of suitable $\lambda$ and $M$ to use the comparison principle combined with the solution $u_0$ of problem \eqref{P2}. To show the $L^{\infty}$ estimates on the gradient we apply Proposition \ref{Prop2}, established in \cite{Grey}.

\section{Preliminaries} \label{preliminares}

Let $W_0^{1,p}(\Omega)$ stand for the usual Sobolev space endowed with the norm $\|u\|=\|\nabla u\|_p$. We denote $p^*=\frac{N p}{N-p}$ if $1<p<N$. In the case $p=N$,  then $p^* \in(p, \infty)$ can be taken arbitrarily large and $p^*=\infty$ if $p>N$. 

\begin{definition} We say that $u\in W_0^{1,p}(\Omega)$ is a solution (respectively, sub-solution and super-solution) of \eqref{ps} if $g_\lambda(\cdot,u,\nabla u):= \lambda u^{-\alpha} + f(x,u,\nabla u) \in L^p(\Omega)$,
\begin{equation*}
    \int_{\Omega} |\nabla u|^{p-2}\nabla u \cdot \nabla \varphi   dx - \int_\Omega g_\lambda(x,u,\nabla u)\varphi dx = 0\ \ (\leq 0, \geq 0) 
\end{equation*}
for all $\varphi\in W_0^{1,p}(\Omega)$, $\varphi\geq 0$ and 
\begin{equation*}
    u=0\ \ (\text{resp.}\ \leq 0, \geq 0) \mbox{ on } \partial\Omega.
\end{equation*}
The condition on $\partial\Omega$ is understood in $W^{1-1/p,p}(\partial\Omega)$, i.e, in the sense of traces.
\end{definition}

By an ordered pair of sub- and super-solutions we mean a sub-solution $\alpha$ and a super-solution $\beta$ such that $\alpha\leq \beta$ a.e. 

It is well-known that the equation $-\Delta_p u =v$ in $\Omega$ with Dirichlet boundary condition has a unique weak solution $u\in C^{1,\sigma}(\bar{\Omega})$ for some $\sigma \in (0,1)$ if $v \in L^{\infty}(\Omega)$, see \cite{Dib, Lieberman, Vazquez}. Moreover, if $v\geq 0$ and $v\not\equiv 0$, then $u>0$. Hence $\partial u / \partial \eta <0$ on $\partial\Omega$, where $\eta$ is the unit normal vector to $\partial \Omega$ pointing outward of $\Omega$. Furthermore,  $u$ is bounded from above and from below by positive multiples of the distance function $\dist(x,\partial\Omega)$. 

Additionally, the associated solution operator $(-\Delta_p)^{-1}: L^{\infty}(\Omega) \to C^{1}(\bar{\Omega})$ is positive, continuous and compact.  In synthesis, $(-\Delta_p)^{-1}$ can be viewed as  a strongly positive operator on $C(\bar{\Omega})$, i.e., $v \in P$ implies $(-\Delta_p)^{-1}v \in \mbox{int}(P)$, where $P$ denotes the cone of positive functions belonging $C(\bar{\Omega})$. 

The following comparison result is true in the case of a bounded domain $\Omega\subset\mathbb{R}^N$ and $u, v \in W^{1, p}(\Omega) \cap C(\bar{\Omega})$, $1 < p < \infty$. If $-\Delta_{p} u \leq-\Delta_{p} v$ on $\Omega$ in the weak sense and $u\leq v$ on $\partial \Omega$, then $u\leq v$ in $\Omega$. 

In order to consider problem \eqref{ps}, let us recall some results, starting with the Dirichlet problem
\begin{equation}\label{p-Laplacian_prob}
	\left\{\begin{array}{rcll}-\Delta_pu &=&f(x) &\text { in }\ \Omega \\ u&=&0 &\text { on }\ \partial \Omega.\end{array} \right. 
\end{equation}

The following comparison result, proved by Cuesta and Tak\'a\v{c} \cite[Proposition 2.3]{Cuesta_Takac}, will be very important in the sequel.  

\begin{lemma}\label{Cuesta_Takac}
Let $f$ be a non-negative continuous function such that $f(u)$ is nonincreasing for $u \in (0,\infty)$, where $1<p$. Assume that, for $p>1$, $u, v \in W^{1, p}(\Omega)$ are weak solutions such that
$$
\begin{array}{rclll}-\Delta_p u &\geq &f(u), & u>0 &\textrm {in } \Omega, \\ -\Delta_p v &\leq& f(v), &v>0 &\textrm{in } \Omega, \end{array}
$$
and $u\geq v$ in $\partial\Omega$. 

Then $u \geq v$ in $\Omega$.   
\end{lemma}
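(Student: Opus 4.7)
The plan is to test the two weak inequalities against the non-negative admissible function $\varphi := (v-u)^+$. Since $u \geq v$ on $\partial\Omega$ in the trace sense, $(v-u)^+ \in W_0^{1,p}(\Omega)$, and being non-negative, it is admissible in both inequalities. Subtracting the tested inequalities and using that $\nabla(v-u)^+ = \nabla v - \nabla u$ on $\{v > u\}$ and vanishes elsewhere, I would obtain
\[
\int_{\{v > u\}} \bigl(|\nabla u|^{p-2}\nabla u - |\nabla v|^{p-2}\nabla v\bigr)\cdot (\nabla v - \nabla u)\, dx \geq \int_{\{v > u\}} \bigl(f(u) - f(v)\bigr)(v-u)\, dx.
\]

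The crucial step is then to combine the two opposite monotonicities. On $\{v > u\}$ we have $0 < u < v$, so by the assumed monotonicity of $f$, $f(u) \geq f(v)$, and the right-hand side above is non-negative. Rewriting the left-hand side as
\[
-\int_{\{v > u\}} \bigl(|\nabla v|^{p-2}\nabla v - |\nabla u|^{p-2}\nabla u\bigr)\cdot (\nabla v - \nabla u)\, dx,
\]
and invoking the classical strict monotonicity of the $p$-Laplacian,
\[
\bigl(|a|^{p-2}a - |b|^{p-2}b\bigr)\cdot (a - b) \geq 0 \qquad \forall\, a, b \in \mathbb{R}^N,
\]
with equality iff $a = b$, shows that this quantity is non-positive. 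Consequently both sides must vanish, forcing $\nabla u = \nabla v$ a.e.\ on $\{v > u\}$, i.e.\ $\nabla(v-u)^+ = 0$ a.e.\ in $\Omega$. Since $(v-u)^+ \in W_0^{1,p}(\Omega)$, the Poincaré inequality then yields $(v-u)^+ \equiv 0$, which is precisely $u \geq v$ in $\Omega$.

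The main technical point to be careful about is the admissibility of $\varphi = (v-u)^+$ in the two weak inequalities: besides its $W_0^{1,p}$ membership (already noted), one needs $f(u)\varphi$ and $f(v)\varphi$ to lie in $L^1(\Omega)$ so that the weak inequalities can indeed be evaluated at $\varphi$. In the current hypotheses $f$ is only assumed continuous and non-negative on $(0,\infty)$, and $u, v > 0$ pointwise in $\Omega$, so this is immediate on compact subsets where $u, v$ are bounded away from $0$; to get the full statement one would truncate $\varphi$ by $\min\{\varphi, k\}\chi_{\{u \geq 1/k\}}$, run the preceding argument on the truncation, and pass to the limit $k \to \infty$ via monotone convergence. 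Once this bookkeeping is granted, the proof is a direct interplay between the (scalar) monotonicity of $f$ and the (vector) monotonicity of the $p$-Laplacian.
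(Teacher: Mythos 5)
The paper itself gives no proof of this lemma: it is imported verbatim from Cuesta and Tak\'a\v{c} \cite[Proposition 2.3]{Cuesta_Takac}, so there is no in-paper argument to compare against. Your proof is the standard weak comparison principle argument, and it is essentially the proof that the cited reference (and most of the literature) uses: test both inequalities with $(v-u)^+\in W_0^{1,p}(\Omega)$, observe that the nonincreasing character of $f$ makes the zero-order difference $\bigl(f(u)-f(v)\bigr)(v-u)^+$ nonnegative on $\{v>u\}$ while the strict monotonicity of $\xi\mapsto|\xi|^{p-2}\xi$ makes the gradient term nonpositive, and conclude $\nabla(v-u)^+=0$, hence $(v-u)^+\equiv 0$ by Poincar\'e. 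The logic is sound. Two small remarks on the bookkeeping you flag at the end. First, the specific truncation $\min\{\varphi,k\}\,\chi_{\{u\geq 1/k\}}$ is not an admissible test function as written, since multiplying by the characteristic function of a superlevel set of $u$ does not in general produce a $W^{1,p}$ function; a standard workaround is to test with $\bigl(v-u-\tfrac{1}{k}\bigr)^+$ (or with $\min\{(v-u)^+,k\}$ for the $L^\infty$ truncation) and let $k\to\infty$. Second, in the setting where the lemma is actually invoked in this paper, the integrability issue largely evaporates: the paper's Definition 2.1 of (sub-/super-)solution already requires $g_\lambda(\cdot,u,\nabla u)\in L^p(\Omega)$, and moreover on $\{v>u\}$ one has $0\leq f(v)(v-u)^+\leq f(u)(v-u)^+$, so once the super-solution inequality is evaluated at $(v-u)^+$ the finiteness of $\int f(v)(v-u)^+$ follows by comparison. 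With that adjustment your argument is complete and matches the cited source in spirit.
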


We now recall the definition of the Lorentz spaces. For a measurable function $\phi\colon \Omega \to \mathbb{R}$ and $t>0$, we denote by
$$\mu_{\phi}(t)=\big|\{x \in \Omega\,:\, |\phi(x)|>t \}\big|$$
its \textit{distribution function}, $|\cdot|$ standing for the Lebesgue measure in $\mathbb{R}^N$. 

For $0\leq r \leq|\Omega|$, the decreasing rearrangement $\phi^*$ of $\phi$ is defined by 
\[
\phi^*(r)=\sup\left\{t>0\,:\,  \mu_{\phi}(t)> r \right\},
\]
and the Lorentz space $L^{p,q}$ is defined for $1<p<\infty$ and $1\leq q<\infty$ by
\[
L^{p,q}=\left\{
\phi\colon\Omega \to \mathbb{R} \mbox{ measurable}\,:\,
\left(\int\limits_0^{|\Omega|}\frac{1}{t} [\phi^*(t)t^{1/p}]^q dt\right)^{1/q}<\infty\right\}
\]
endowed with norm
$$
\|\phi\|_{p,q}=\left(\int\limits_0^{|\Omega|}\frac{1}{t} [\phi^*(t)t^{1/p}]^q dt\right)^{1/q}.
$$

Denoting the usual norm in the space $L^q(\Omega)$ by $\|\cdot\|_q$, for $1<q<\infty$ we have the identity
\[
L^{q,q}=L^q(\Omega)\quad\textrm{ and }\quad \|\phi\|_{q,q} = \|\phi\|_{q}.\]
Furthermore, the following continuous embeddings are true for $1<s<N<q<\infty$.
\begin{equation*}
	L^q \hookrightarrow L^{N,1} \hookrightarrow L^{N,s} \hookrightarrow L^{N,N}=L^N \hookrightarrow L^{N,q} \hookrightarrow L^s,
\end{equation*}
see \cite{Adams} or  \cite{Tartar} for details.

The next two results are important because they relate the norm $\left\|\nabla u_p\right\|_{\infty}^{p-1}$ of a solution $u_p\in W_0^{1, p}(\Omega)$ of problem \eqref{p-Laplacian_prob} with the norm $\|f\|_q$ by means of an explicit dependence on $p$, although not necessarily optimal.
\begin{proposition}\textup{\cite[Theorem 1.3]{Grey}}\label{Prop1}
Let $\Omega$ be a bounded domain of $\mathbb{R}^2$ such that $\partial \Omega \in W^2 L^{\theta, 1}$ for some $\theta>1$. Suppose that $f \in$ $L^q(\Omega)$ for some $q>2$, and let $u_p\in W_0^{1, p}(\Omega)$ be a solution of problem \eqref{p-Laplacian_prob}. Then,
$$\left\|\nabla u_p\right\|_{\infty}^{p-1} \leq C \begin{cases}2^{\frac{p}{p-1}}(p-1)^{-\frac{2 \theta}{\theta-1}}\|f\|_q &\textrm {if }\ 1<p<2 \\ p^{\frac{5}{2}+\frac{2 \theta}{\theta-1}}\|f\|_q &\textrm{if }\ p \geq 2,\end{cases}
$$
for some constant $C$ depending at most on $q$ and $\Omega$. Moreover, if the assumption $\partial \Omega \in W^2 L^{\theta, 1}$ is replaced with the assumption that is $\Omega$ convex, then
$$
\left\|\nabla u_p\right\|_{\infty}^{p-1} \leq C \begin{cases}2^{\frac{p}{p-1}}\|f\|_q &\textrm {if }\ 1<p<2 \\ p^{\frac{5}{2}}\|f\|_q &\textrm{if }\ p \geq 2,\end{cases}
$$
for some constant $C$ depending at most on $q$ and $\Omega$.    
\end{proposition}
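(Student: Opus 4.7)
The plan is to derive the gradient bound from a second-order Calder\'on--Zygmund estimate for the $p$-Laplacian, combined with the Sobolev embedding of $W^{1,q}$ into $L^\infty$ that holds in dimension two as soon as $q>2$. This two-step strategy, in the spirit of Cianchi and Maz'ya, is natural because the quantity $|\nabla u|^{p-2}\nabla u$ behaves morally like a Riesz potential of $f$, so first-order Sobolev regularity for this field in the plane already forces boundedness.

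First I would establish an estimate of the form
$$\bigl\|\,|\nabla u_p|^{p-2}\nabla u_p\,\bigr\|_{W^{1,q}(\Omega)} \leq C(p,q,\Omega)\,\|f\|_{L^q(\Omega)}.$$
The standard device is to write $\diver(|\nabla u|^{p-2}\nabla u)=-f$ and test the differentiated equation against suitable tangential and normal derivatives of $u$. Difference quotients give interior control; the boundary contribution is handled by flattening $\partial\Omega$ and invoking a Lorentz-space trace inequality, where $\partial\Omega\in W^2 L^{\theta,1}$ enters exactly to control the second fundamental form by a quantity in $L^{\theta,1}$. If $\Omega$ is convex, the relevant boundary integral has a favorable sign and may be dropped, which is what produces the cleaner constant in the second displayed bound.

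Next I would apply the planar embedding. For $q>2=N$ one has $W^{1,q}(\Omega)\hookrightarrow C(\bar\Omega)$ continuously, with a constant depending only on $q$ and $\Omega$. Applying this to $|\nabla u|^{p-2}\nabla u$ and using $\|\,|\nabla u|^{p-2}\nabla u\,\|_\infty = \|\nabla u\|_\infty^{p-1}$ immediately chains to the advertised inequality. The dependence on $\theta$ passes through the sharper Lorentz-Sobolev variant used to absorb the $L^{\theta,1}$ boundary term in the previous step.

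The main obstacle is tracking the explicit dependence on $p$ of the constant in the second-order estimate rather than just proving a clean qualitative bound. The quadratic form in the Hessian produced by the testing procedure has ellipticity constant scaling like $\min\{1,p-1\}$ and upper bound scaling like $\max\{1,p-1\}$; after repeated Cauchy--Schwarz these factors accumulate into the $p^{5/2}$ growth when $p\ge 2$ and the $(p-1)^{-1}$-type singularity when $1<p<2$, while the additional $2\theta/(\theta-1)$ exponent reflects the dual index appearing in the Lorentz trace inequality on $\partial\Omega$. Keeping every constant in quantitative form throughout the argument, rather than hiding it in a generic $C$, is the delicate bookkeeping that produces the stated powers, and is what justifies the simpler exponents in the convex case where no trace inequality is needed.
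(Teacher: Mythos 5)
This proposition is not proved in the paper at all: it is imported verbatim as Theorem 1.3 of Ercole \cite{Grey}, which in turn makes quantitative in $p$ the global Lipschitz estimate of Cianchi and Maz'ya \cite{Cianchi_Mazya}. So there is no in-paper argument to compare yours against; the comparison has to be with the cited proof, and relative to that your sketch has a genuine gap.

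The gap is your first step, the global nonlinear Calder\'on--Zygmund estimate $\bigl\|\,|\nabla u_p|^{p-2}\nabla u_p\,\bigr\|_{W^{1,q}(\Omega)}\le C\|f\|_{L^q(\Omega)}$ for an arbitrary $q>2$. This is not a ``standard device'': testing the differentiated equation and using difference quotients yields energy-type, i.e.\ $L^2$-based, information, and the sharp second-order result available at this level of generality is $\bigl\|\,|\nabla u|^{p-2}\nabla u\,\bigr\|_{W^{1,2}}\le C\|f\|_{L^2}$. A $W^{1,2}$ bound is useless for your second step, since the borderline embedding $W^{1,2}\hookrightarrow L^\infty$ fails in the plane, and upgrading the integrability exponent from $2$ to $q$ is a substantially harder nonlinear Calder\'on--Zygmund problem that the cited proof never needs to solve. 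Cianchi--Maz'ya, and Ercole after them, instead bound $\|\nabla u\|_\infty^{p-1}$ directly by a level-set and rearrangement argument: testing against truncations of $|\nabla u|$ produces a one-dimensional differential inequality for the distribution function of $|\nabla u|$, the second fundamental form of $\partial\Omega$ (controlled in $L^{\theta,1}$, or of favorable sign when $\Omega$ is convex) enters the boundary terms, and the hypothesis $f\in L^q$ with $q>2=N$ (the planar substitute for $f\in L^{N,1}$ when $N\ge 3$) is exactly what makes the resulting integral converge, since the Riesz-potential-type kernel there maps $L^{N,1}$ into $L^\infty$. The explicit powers of $p$ and the exponent $\tfrac{2\theta}{\theta-1}$ arise from tracking constants through that iteration; your closing paragraph asserts that ``repeated Cauchy--Schwarz'' accumulates into $p^{5/2}$ and $(p-1)^{-2\theta/(\theta-1)}$ without any derivation, so the quantitative content of the statement --- which is the whole point of citing Ercole rather than Cianchi--Maz'ya --- is assumed rather than proved.
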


\begin{proposition}\textup{\cite[Theorem 1.2]{Grey}}\label{Prop2}
Let $\Omega$ be a bounded domain of $\mathbb{R}^N, N \geq 3$, such that $\partial \Omega \in W^2 L^{\theta, 1}$ for some $\theta>N-1$, and assume that $f \in L^{N, 1}(\Omega)$. Let $u_p \in W_0^{1, p}(\Omega)$ be a weak solution of the Dirichlet problem \eqref{p-Laplacian_prob}. Then the estimate
$$\left\|\nabla u_p\right\|_{\infty}^{p-1} \leq C \begin{cases}2^{\frac{p}{p-1}}(p-1)^{-\frac{\theta N}{\theta-(N-1)}}\|f\|_{N, 1} & \text { if } 1<p<2 \\ p^{\frac{5}{2}+\frac{\theta N}{\theta-(N-1)}}\|f\|_{N, 1} &\textrm {if }\ p \geq 2 .\end{cases}$$
holds for a constant $C$, depending at most on $N$ and $\Omega$. Moreover, if the assumption $\partial \Omega \in W^2 L^{\theta, 1}$ is replaced with the assumption that is $\Omega$ convex, then 
$$\left\|\nabla u_p\right\|_{\infty}^{p-1} \leq C \begin{cases}2^{\frac{p}{p-1}}\|f\|_{N, 1} &\textrm{if }\ 1<p<2 \\ p^{\frac{5}{2}}\|f\|_{N, 1}&\textrm{if }\ p \geq 2\end{cases}$$
holds for constant $C$ depending at most on $N$ and $\Omega$.    
\end{proposition}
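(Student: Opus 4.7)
The plan is to combine symmetrization estimates in Lorentz spaces with the boundary regularity theory of Cianchi--Maz'ya, while carefully tracking the $p$-dependence of every constant along the way. The heart of the argument is a pointwise gradient bound expressed as a Lorentz-type integral of $f^*$, after which the announced estimates follow from the identification of the weight with the $L^{N,1}$ norm.

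First, I would derive a pointwise bound of the form
$$|\nabla u_p(x)|^{p-1} \le K(p,\Omega)\int_0^{|\Omega|} f^*(s)\, \psi(s)\, ds$$
where $\psi(s)\sim s^{1/N-1}$ near the origin. The natural route is Talenti-type symmetrization applied to $\operatorname{div}(|\nabla u|^{p-2}\nabla u) = -f$: one integrates over level sets of $u_p$, uses the isoperimetric inequality on $\mathbb{R}^N$, and then inverts the resulting ODE for the decreasing rearrangement $u_p^*$. To pass from the rearrangement of $u_p$ to $\|\nabla u_p\|_{\infty}$ one needs a boundary gradient estimate, and this is precisely where $\partial\Omega\in W^2 L^{\theta,1}$ enters: the second fundamental form of $\partial\Omega$ lies in $L^{\theta,1}(\partial\Omega)$, which is exactly the minimal boundary regularity guaranteeing a global Lipschitz bound for solutions of linear-growth elliptic problems. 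In the convex case this boundary correction is unnecessary, which explains the cleaner estimate there.

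Second, I would identify the weight $\psi$ with the Lorentz norm by recalling that
$$\|f\|_{N,1}=\int_0^{|\Omega|} f^*(t)\,t^{1/N-1}\,dt$$
up to a universal constant. The pointwise bound then immediately yields $\|\nabla u_p\|_{\infty}^{p-1}\le K(p,\Omega)\|f\|_{N,1}$, and the task reduces to chasing the $p$-dependence of $K(p,\Omega)$ through each ingredient of the derivation.

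Third, and most delicately, I would track the $p$-dependence. Three sources contribute and must be separated: \textit{(i)} the coercivity constant of the $p$-Laplacian, which for $1<p<2$ behaves like $(p-1)^{-1}$ when testing against truncations, ultimately producing the factor $(p-1)^{-\theta N/(\theta-(N-1))}$; \textit{(ii)} the comparison of $u_p^*$ with the symmetrized problem, which for $p\ge 2$ contributes the factor $p^{5/2}$ through a Moser-type iteration on truncations $G_k(u)=(u-k)^+$ combined with the $p$-th root on both sides; and \textit{(iii)} the boundary correction via the Lorentz norm of the second fundamental form, which supplies the exponent $\theta N/(\theta-(N-1))$ by dualizing $L^{\theta,1}(\partial\Omega)$ against $L^{\theta',\infty}$ in the Hardy--Littlewood rearrangement inequality. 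Summing the contributions and isolating the worst powers yields the two regimes in the statement.

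The main obstacle I expect is precisely \emph{keeping every estimate quantitative and $p$-explicit}. Standard regularity theory hides the $p$-dependence inside qualitative constants, so the work is to redo each step of the rearrangement argument while tracking every application of Hölder's and Young's inequalities, especially in the singular regime $1<p<2$ where one must avoid picking up uncontrolled powers of $(p-1)^{-1}$, and in the degenerate regime $p\ge 2$ where the iteration on truncation levels must be organized so that only a polynomial factor $p^{5/2}$ appears rather than an exponential in $p$.
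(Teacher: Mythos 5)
This statement is not proved in the paper at all: it is imported verbatim as \cite[Theorem 1.2]{Grey}, with the authors noting that the qualitative estimate goes back to Cianchi--Maz'ya \cite{Cianchi_Mazya} and that the explicit $p$-dependence of the constants is due to Ercole \cite{Grey}. So there is no ``paper proof'' to match your attempt against; what you have written is a plan for reproving the cited result, and as such it has a genuine gap at its central step.

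The gap is the claimed pointwise bound $|\nabla u_p(x)|^{p-1}\le K(p,\Omega)\int_0^{|\Omega|}f^*(s)\,\psi(s)\,ds$ obtained by ``Talenti-type symmetrization applied to level sets of $u_p$'' followed by ``inverting the ODE for $u_p^*$.'' Talenti symmetrization of the equation over the level sets of $u_p$ controls the decreasing rearrangement $u_p^*$ and integral norms of $\nabla u_p$, but it cannot produce an $L^\infty$ bound on the gradient: the comparison with the radially symmetrized problem loses all pointwise information about $\nabla u_p$ away from the extremal configuration. The actual Cianchi--Maz'ya mechanism is different in kind: one differentiates the (approximated) equation and treats a power of $|\nabla u_p|$ as a subsolution of an auxiliary elliptic problem, then applies rearrangement/level-set arguments to $|\nabla u_p|$ itself; the second fundamental form of $\partial\Omega$ in $L^{\theta,1}$ enters through a boundary integral generated by this differentiation, not as a ``boundary correction'' appended to an interior symmetrization estimate. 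Without that step your scheme never reaches $\|\nabla u_p\|_\infty$. Relatedly, the specific exponents $5/2$ and $\theta N/(\theta-(N-1))$ and the factor $(p-1)^{-\theta N/(\theta-(N-1))}$ for $1<p<2$ are asserted with heuristic attributions (coercivity, Moser iteration, duality) rather than derived, so even granting the pointwise bound the quantitative half of the statement remains unproved. If you want to fill this in, the route is to follow \cite{Cianchi_Mazya} and \cite{Grey} directly rather than Talenti's comparison theorem.
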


The two last results were originally established by Chianchi and Maz'ya \cite{Cianchi_Mazya} in a more general context for general constants $C$. However, the explicit dependence of the constants on  $p$ was obtained by Ercole in \cite{Grey}.\vspace*{.2cm}

Now, suppose that there exist constants $0<c<\infty$ and $0<\delta<1$ such that
\begin{equation}\label{est_dist}
0 \leq f(x) \leq c\, \textup{d}_\Omega(x)^{-\delta} 
\end{equation}
holds for almost all $x \in \Omega$,
where $\textup{d}_{\Omega} \in W^{1, \infty}(\Omega) \cap C_0(\bar{\Omega})$ denotes the distance function to the boundary defined by
$$
\textup{d}_\Omega(x) := \dist(x,\partial\Omega) = \inf_{y\in\partial\Omega}|x-y|,\ x \in \bar{\Omega} .
$$

When $0<\delta<1$, the integrability requirement given by
\begin{equation*}
\int_{\Omega}d_\Omega(x)^{-\delta}dx <\infty, 
\end{equation*} 
is a consequence of \cite[Lemma 1]{Lazer} (see also \cite[Lemma B.1]{tese2}). Hence, the condition $0<\alpha q<1$ in Theorem \ref{main result1} implies that
\begin{equation}\label{integral_dist}
\int_{\Omega}d_\Omega(x)^{-\alpha q}dx <\infty.
\end{equation}

The following result is a simplified version of the one obtained by Giacomoni, Schindler and Tak\'a\v{c}. \cite[Theorem B.1]{Giacomoni}.
\begin{proposition}\label{giacomoni} Assume that $f$ satisfies the growth hypothesis \eqref{est_dist}. Let $u \in W_0^{1, p}(\Omega)$ be the (unique) weak solution of problem \eqref{p-Laplacian_prob}. In addition, assume
\begin{equation}\label{est_u_2}
0 \leq u(x) \leq C d_\Omega(x) \quad \text { for almost all } x \in \Omega,
\end{equation}
where $C$ is a constant, $0 \leq C<\infty$. Then there exist constants $\beta$ and $\Gamma, 0<\beta<1$ and $0 \leq \Gamma<\infty$, depending solely on $\Omega, N, p$, on the constants $c, \delta$ in \eqref{est_dist}, and on the constant $C$ in \eqref{est_u_2}, such that $u$ satisfies $u \in C^{1, \beta}(\bar{\Omega})$ and
\begin{equation*}
\|u\|_{C^{1, \beta}(\bar{\Omega})} \leq \Gamma.
\end{equation*}
\end{proposition}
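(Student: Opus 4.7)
The plan is to follow \cite[Theorem B.1]{Giacomoni}, combining standard interior regularity for the $p$-Laplacian with a rescaled boundary analysis that exploits the pointwise bound \eqref{est_u_2}. The hypothesis \eqref{est_u_2} together with the boundedness of $\Omega$ immediately gives a global $L^\infty$ bound $\|u\|_\infty \leq C \sup_{x \in \Omega} d_\Omega(x)$ on $u$.

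Interior $C^{1,\alpha_0}$ regularity is straightforward: on every subdomain $\Omega_\rho = \{x \in \Omega : d_\Omega(x) > \rho\}$, the right-hand side $f$ is in $L^\infty$ with norm bounded by $c\rho^{-\delta}$, and the classical estimates of DiBenedetto and Lieberman (see \cite{Dib,Lieberman}) apply to give $\|u\|_{C^{1,\alpha_0}(\overline{\Omega_{2\rho}})} \leq \Gamma_\rho$, with $\alpha_0 \in (0,1)$ depending only on $p$ and $N$, and $\Gamma_\rho$ depending additionally on $\rho, c, \delta, \|u\|_\infty$ and $\Omega$.

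Boundary regularity is the core of the argument. Fix a point $x_0 \in \partial\Omega$, straighten $\partial\Omega$ locally around $x_0$, and consider the dyadic rescalings $u_j(y) := 2^{j}\,u(x_0 + 2^{-j}y)$ on a fixed half-ball $B^+$. A direct computation using the homogeneity of the $p$-Laplacian gives $-\Delta_p u_j(y) = 2^{-j}\,f(x_0 + 2^{-j}y)$, and \eqref{est_dist} combined with the asymptotic $d_\Omega(x_0 + 2^{-j}y) \sim 2^{-j}d(y)$ (where $d(y)$ denotes the distance to the flat boundary) yields
\[
|{-}\Delta_p u_j(y)| \leq c\,2^{-j(1-\delta)}\,d(y)^{-\delta} \quad\text{on } B^+.
\]
Since $\delta \in (0,1)$, the pointwise majorant $d(y)^{-\delta}$ lies in $L^q(B^+)$ for every $q < 1/\delta$, while the prefactor $2^{-j(1-\delta)}$ is bounded uniformly in $j$ (and vanishes as $j \to \infty$). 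Meanwhile \eqref{est_u_2} yields $\|u_j\|_{L^\infty(B^+)} \leq C'$ independently of $j$. Applying Lieberman's boundary $C^{1,\alpha}$ estimate uniformly in $j$ produces a Hölder modulus for $\nabla u_j$ near the flat piece; undoing the rescaling, and varying $x_0$ over $\partial\Omega$ using compactness of $\partial\Omega$ and the $C^{1,1}$ (or $W^2L^{\theta,1}$) regularity of the boundary, one obtains a Hölder modulus for $\nabla u$ up to $\partial\Omega$. Patching this with the interior estimate yields $u \in C^{1,\beta}(\bar\Omega)$ with a bound $\Gamma$ depending only on $\Omega, N, p, c, \delta$ and $C$.

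The main obstacle is ensuring that the dyadic rescaling at $\partial\Omega$ is compatible with Lieberman's regularity theory despite the unbounded right-hand side: the net scaling factor $2^{-j(1-\delta)}$ stays bounded precisely because $\delta < 1$, which is exactly the integrability threshold $\int_\Omega d_\Omega^{-\delta} dx < \infty$ recorded in \eqref{integral_dist}. One must also verify that flattening $\partial\Omega$ only produces lower-order perturbations of the $p$-Laplacian structure that Lieberman's framework absorbs, which is the reason the hypothesis $\partial\Omega \in C^{1,1}$ (or convexity plus $C^{1,\tau}$) is made; both points are handled essentially as in \cite{Giacomoni}.
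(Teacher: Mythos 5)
The paper does not actually prove this proposition: it is imported verbatim as a simplified form of \cite[Theorem B.1]{Giacomoni}, so there is no internal argument to compare against. Your sketch reconstructs the strategy of that cited proof (interior DiBenedetto--Lieberman estimates away from $\partial\Omega$ plus a boundary rescaling), and your scaling computations are correct: with $u_j(y)=2^{j}u(x_0+2^{-j}y)$ one indeed gets $-\Delta_p u_j(y)=2^{-j}f(x_0+2^{-j}y)\leq c\,2^{-j(1-\delta)}d(y)^{-\delta}$ and $\|u_j\|_{L^\infty(B^+)}\leq C'$ uniformly in $j$.

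The gap is in the sentence ``Applying Lieberman's boundary $C^{1,\alpha}$ estimate uniformly in $j$.'' The rescaled right-hand sides are controlled only by a multiple of $d(y)^{-\delta}$, which belongs to $L^q(B^+)$ precisely for $q<1/\delta$; since the hypothesis is only $0<\delta<1$, this exponent can fall below the threshold $q>N$ required by the standard $C^{1,\alpha}$ theory for equations with $L^q$ data, and Lieberman's boundary theorem as cited is formulated for essentially bounded inhomogeneities. So the estimate you invoke does not apply off the shelf whenever $\delta\geq 1/N$, and closing exactly this loop is the nontrivial content of \cite[Theorem B.1]{Giacomoni}: one must exploit the vanishing prefactor $2^{-j(1-\delta)}$ together with the linear growth bound $u_j\leq C'd$ to compare $u_j$ with a $p$-harmonic replacement (or a barrier) and run an iteration that tolerates the weighted, unbounded data. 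Since you ultimately defer both delicate points to \cite{Giacomoni}, your proposal ends up making the same move the paper does --- an appeal to the reference --- but the intermediate step you present as routine is the heart of the matter and fails as literally written for $\delta\in[1/N,1)$.
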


To prove Theorems \ref{main result2}  and \ref{main result1} in this paper, we consider the unique solution  $u_0 \in W^{1,p}_0(\Omega) \cap C^{1, \beta}(\bar{\Omega})$ of the $p$-Laplacian problem
\begin{equation} \label{P2}
	\left\{
	\begin{array}{rcll}
		-\Delta_p u_0 & = &u_0^{ -\alpha}  &\textrm{in }\ \Omega\\
		u_0 & > &0 &\textrm{in }\ \Omega\\
		u_0 & = &0 &\textrm{on }\ \partial\Omega,
	\end{array}
	\right.
\end{equation}
see \cite[Theorem 1.3]{Canino_Sciunzi_Trombetta} for further results about this problem. 

The next result is technical and will be crucial in the proof of our Theorem \ref{main result1}.
\begin{proposition}\label{lem3} Let $u_0$ be the solution of \eqref{P2}. There are positive constants $M$ and $\lambda$ such that
\begin{enumerate}
\item[$(i)$] $aM^{r_1} \Vert u_0 \Vert^{r_1}_{\infty}+ b M^{r_2}\leq \dfrac{\lambda^{1-\alpha}}{\Vert u_0 \Vert^{\alpha}_{\infty}}$;
\item[$(ii)$] $2^{\frac{1}{p-1}} \lambda^{\frac{1-\alpha}{p-1}} \leq M$;
\item[$(iii)$] $M \leq \lambda^{\frac{2-p}{\alpha}}$. 
\end{enumerate}
\end{proposition}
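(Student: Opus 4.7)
The plan is to parametrize $M$ as a function of $\lambda$ so that condition (ii) holds with equality, and then show that conditions (i) and (iii) are satisfied provided $\lambda$ is chosen sufficiently small. Concretely, I would set
\[
M = M(\lambda) := 2^{\frac{1}{p-1}} \lambda^{\frac{1-\alpha}{p-1}},
\]
which turns (ii) into an identity, leaving only two inequalities to verify. This reduces the problem to a sign analysis of certain exponents of $\lambda$.

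For (i), I would substitute $M$ into the left-hand side to obtain
\[
a\,2^{\frac{r_1}{p-1}}\,\lambda^{\frac{r_1(1-\alpha)}{p-1}}\|u_0\|_\infty^{r_1}
\;+\; b\,2^{\frac{r_2}{p-1}}\,\lambda^{\frac{r_2(1-\alpha)}{p-1}},
\]
and then divide by the target right-hand side $\lambda^{1-\alpha}/\|u_0\|_\infty^{\alpha}$. Each term becomes a positive constant times $\lambda^{\,(r_i-(p-1))(1-\alpha)/(p-1)}$, and the hypothesis $r_i>p-1$ makes every exponent strictly positive. Therefore the quotient tends to $0$ as $\lambda\to 0^+$, which delivers (i) for all $\lambda$ below some threshold $\lambda_1>0$ depending only on $a,b,p,\alpha,r_1,r_2$ and $\|u_0\|_\infty$.

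For (iii), the substitution reduces the claim to
\[
2^{\frac{1}{p-1}} \leq \lambda^{\gamma}, \qquad \gamma := \frac{2-p}{\alpha}-\frac{1-\alpha}{p-1} = \frac{\alpha(1-\alpha)-(p-1)(2-p)}{\alpha(p-1)}.
\]
When $p\geq 2$, the term $(p-1)(2-p)$ is non-positive while $\alpha(1-\alpha)>0$, so $\gamma<0$; consequently $\lambda^{\gamma}\to +\infty$ as $\lambda\to 0^+$, and there exists $\lambda_2>0$ for which the inequality holds whenever $\lambda\le \lambda_2$. Taking $\lambda\in (0,\min\{\lambda_1,\lambda_2\})$ and $M=M(\lambda)$ completes the argument. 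The single real obstacle is the sign of $\gamma$, which is also the step that needs the most care in the regime $1<p<2$: there one must check that the hypotheses of Theorem~\ref{main result1} (in particular $0<\alpha q<1$ with $q>\max\{N,p'\}$, together with $r_1,r_2>p-1$) leave enough room to keep $\gamma<0$; if necessary, one can instead choose $M=c\,\lambda^{\sigma}$ with $\sigma$ in the open interval $\bigl((2-p)/\alpha,(1-\alpha)/(p-1)\bigr)$, which is nonempty precisely in this compatible regime, and repeat the same asymptotic analysis. Everything else — the two $o(\lambda^{1-\alpha})$ estimates in (i), and the limiting behaviour of $\lambda^{\gamma}$ in (iii) — is routine once the exponents have been checked.
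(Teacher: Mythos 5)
Your proposal is correct in substance and follows essentially the same route as the paper: both arguments reduce the three conditions to comparisons of powers of $\lambda$, use $r_1,r_2>p-1$ to make the window of admissible $M$ nonempty for small $\lambda$, and the only cosmetic difference is that you pin $M$ to the lower endpoint $2^{\frac{1}{p-1}}\lambda^{\frac{1-\alpha}{p-1}}$ while the paper lets $M$ range over the whole interval. Two remarks. First, your simplified formula for $\gamma$ has the wrong sign: $\frac{2-p}{\alpha}-\frac{1-\alpha}{p-1}=\frac{(2-p)(p-1)-\alpha(1-\alpha)}{\alpha(p-1)}$, the negative of what you wrote; your subsequent conclusion that $\gamma<0$ for $p\ge 2$ is nonetheless the correct statement for the correct $\gamma$. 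Second, the caveat you raise for $1<p<2$ is genuine and not fully resolved by your fallback choice $M=c\lambda^{\sigma}$, since the interval $\bigl(\frac{2-p}{\alpha},\frac{1-\alpha}{p-1}\bigr)$ is nonempty exactly when $(2-p)(p-1)<\alpha(1-\alpha)$, i.e.\ exactly when $\gamma<0$; indeed (ii) and (iii) together force $2^{\frac{1}{p-1}}\lambda^{\frac{1-\alpha}{p-1}}\le\lambda^{\frac{2-p}{\alpha}}$, which fails for all small $\lambda$ if $\gamma\ge 0$. It is worth noting that the paper's own proof carries the same implicit assumption: the step that ``solves for $\lambda$'' the inequality $\lambda^{\frac{\alpha(1-\alpha)+(p-1)(p-2)}{\alpha(p-1)}}<2^{-\frac{1}{p-1}}$ preserves the direction of the inequality only when the exponent $\alpha(1-\alpha)+(p-1)(p-2)$ is positive, which is precisely the condition $\gamma<0$. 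So your argument is on the same footing as the paper's; neither covers the regime $1<p<2$ with $\alpha(1-\alpha)\le(2-p)(p-1)$, and flagging this explicitly, as you do, is a point in your favour.
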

\begin{proof} Of course, 
$$aM^{r_1}\Vert u_0 \Vert_{\infty}^{r_1} \leq \dfrac{\lambda^{1-\alpha}}{2\Vert u_0 \Vert_{\infty}^{\alpha}}\quad\Leftrightarrow\quad M \leq \dfrac{\lambda^{\frac{1-\alpha}{r_1}}}{\left(2a\Vert u_0 \Vert_{\infty}^{r_1+\alpha}\right)^{\frac{1}{r_1}}}$$ 
and 
$$b M^{r_2} \leq \dfrac{\lambda^{1-\alpha}}{2\Vert u_0 \Vert_{\infty}^{\alpha}}\quad\Leftrightarrow\quad M \leq \dfrac{\lambda^{\frac{1-\alpha}{r_2}}}{\left(2b\Vert u_0 \Vert_{\infty}^{\alpha}\right)^{\frac{1}{r_2}}}. $$

Comparing the inequality in $(ii)$ with the first expression obtained for $M$, we observe that
\[2^{\frac{1}{p-1}} \lambda^{\frac{1-\alpha}{p-1}} < \dfrac{\lambda^{\frac{1-\alpha}{r_1}}}{\left(2a\Vert u_0 \Vert_{\infty}^{r_1+\alpha}\right)^{\frac{1}{r_1}}}
\]
is equivalent to
\begin{equation}\label{lambda}
\lambda^{\frac{(1-\alpha)(r_1+1-p)}{r_1(p-1)}} < \dfrac{1}{2^{\frac{1}{p-1}}\left(2a\Vert u_0 \Vert_{\infty}^{r_1+\alpha}\right)^{\frac{1}{r_1}}}.
\end{equation}

Solving \eqref{lambda} for $\lambda$ we obtain
\[\lambda<\dfrac{1}{2^{\frac{r_1}{(1-\alpha)(r_1+1-p)}}\left(2a\Vert u_0 \Vert_{\infty}^{r_1+\alpha}\right)^{\frac{p-1}{(1-\alpha)(r_1+1-p)}}}.
\]
Similarly, we derive
\[
\lambda<\dfrac{1}{2^{\frac{r_2}{(1-\alpha)(r_2+1-p)}}\left(2b\Vert u_0 \Vert_{\infty}^{\alpha}\right)^{\frac{p-1}{(1-\alpha)(r_2+1-p)}}}.
\]
Now let us consider $(ii)$ and $(iii)$. We have 
\begin{equation*}
	2^{\frac{1}{p-1}} \lambda^{\frac{1-\alpha}{p-1}} < \lambda^{\frac{2-p}{\alpha}} \Leftrightarrow  \lambda^{\frac{\alpha(1-\alpha)+(p-1)(p-2)}{\alpha(p-1)}} < \dfrac{1}{2^{\frac{1}{p-1}}} \Leftrightarrow \lambda < \dfrac{1}{2^{\frac{\alpha}{\alpha(1-\alpha)+(p-1)(p-2)}}}.
\end{equation*}
Therefore, in order to obtain $(i)-(ii)-(iii)$ it is enough to define
\begin{align*}
A \coloneqq &\min\Bigg\{\dfrac{1}{2^{\frac{r_1}{(1-\alpha)(r_1+1-p)}}\left(2a\Vert u_0 \Vert_{\infty}^{r_1+\alpha}\right)^{\frac{p-1}{(1-\alpha)(r_1+1-p)}}}, \dfrac{1}{2^{\frac{r_2}{(1-\alpha)(r_2+1-p)}}\left(2b\Vert u_0 \Vert_{\infty}^{\alpha}\right)^{\frac{p-1}{(1-\alpha)(r_2+1-p)}}},\\ &\qquad\quad \dfrac{1}{2^{\frac{\alpha}{\alpha(1-\alpha)+(p-1)(p-2)}}}, 1 \Bigg\}
\end{align*}
and consider
\begin{equation*}
	0 < \lambda < A,
\end{equation*}
since the last inequality implies
\begin{equation*}
2^{\frac{1}{p-1}} \lambda^{\frac{1-\alpha}{p-1}} < \min\Bigg\{\dfrac{\lambda^{\frac{1-\alpha}{r_1}}}{\left(2a\Vert u_0 \Vert_{\infty}^{r_1+\alpha}\right)^{\frac{1}{r_1}}}, \dfrac{\lambda^{\frac{1-\alpha}{r_2}}}{\left(2b\Vert u_0 \Vert_{\infty}^{\alpha}\right)^{\frac{1}{r_2}}}, \lambda^{\frac{2-p}{\alpha}}\Bigg\}.
\end{equation*}

With such a choice of $\lambda$, we can choose $M$ satisfying 
\begin{equation*}
2^{\frac{1}{p-1}} \lambda^{\frac{1-\alpha}{p-1}} \leq M \leq \min\Bigg\{\dfrac{\lambda^{\frac{1-\alpha}{r_1}}}{\left(2a\Vert u_0 \Vert_{\infty}^{r_1+\alpha}\right)^{\frac{1}{r_1}}}, \dfrac{\lambda^{\frac{1-\alpha}{r_2}}}{\left(2b\Vert u_0 \Vert_{\infty}^{\alpha}\right)^{\frac{1}{r_2}}}, \lambda^{\frac{2-p}{\alpha}}\Bigg\},
\end{equation*} 
and we are done.
\end{proof}

We now turn our attention to the singular  problem
\begin{equation}\label{problem_Giac}
\left\{\begin{array}{rcll}-\Delta_p u&=&\frac{\lambda}{u^\delta}+u^s &\text{in }\ \Omega, \\
u&>&0 &\text{in }\ \Omega,\\
u&=&0,  &\text{on }\ \partial \Omega,\end{array}\right.
\end{equation}
where $1<p<\infty$, $p-1<s \leq p^*-1$, $\lambda>0$, and $0<\delta<1$. 

The following result is established in \cite[Lemma A.7]{Giacomoni}.
\begin{lemma}\label{lemma_Gia2}
Each positive weak solution $u$ of problem \eqref{problem_Giac} satisfies $$c_\lambda \textup{d}_\Omega \leq u \leq K_\lambda \textup{d}_\Omega\ \textrm{ a.e. in }\ \Omega,$$ where $0<c_\lambda \leq K_\lambda<\infty$ are some constants independent of $u$.
\end{lemma}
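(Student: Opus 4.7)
\textbf{Proof proposal for Lemma \ref{lemma_Gia2}.} The plan is to split the double estimate into a lower and an upper bound, obtaining each by a barrier/comparison argument that does not refer to the particular solution $u$.

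For the lower bound, the idea is to discard the term $u^{s}\geq 0$ and compare $u$ with the solution of the purely singular problem. Let $v_{\lambda}\in W_{0}^{1,p}(\Omega)$ be the unique positive weak solution of $-\Delta_{p}v=\lambda v^{-\delta}$ in $\Omega$ with $v=0$ on $\partial\Omega$; up to the scaling $v_{\lambda}=\lambda^{1/(p-1+\delta)}u_{0}$, this is exactly the solution $u_{0}$ of \eqref{P2}. The results recalled just before Proposition \ref{giacomoni} (together with \cite{Canino_Sciunzi_Trombetta}) give $v_{\lambda}\in C^{1,\beta}(\bar{\Omega})$ with $v_{\lambda}>0$ in $\Omega$ and $\partial v_{\lambda}/\partial\eta<0$ on $\partial\Omega$, so that $v_{\lambda}\geq c_{\lambda}d_{\Omega}$ for some $c_{\lambda}>0$ depending only on $\lambda,p,\delta,\Omega$. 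Since $-\Delta_{p}u=\lambda u^{-\delta}+u^{s}\geq\lambda u^{-\delta}$, the map $t\mapsto\lambda t^{-\delta}$ is nonnegative and nonincreasing on $(0,\infty)$, and $u=v_{\lambda}=0$ on $\partial\Omega$, so Lemma \ref{Cuesta_Takac} applies and yields $u\geq v_{\lambda}\geq c_{\lambda}d_{\Omega}$ a.e.\ in $\Omega$.

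For the upper bound I would proceed in two stages. First, any positive weak solution $u$ of \eqref{problem_Giac} lies in $L^{\infty}(\Omega)$: since $s\leq p^{\ast}-1$ and $-\Delta_{p}u\leq\lambda u^{-\delta}+u^{s}$, a standard Moser/De Giorgi iteration (on $(u-k)_{+}$, where the contribution of the singular term is controlled via $u^{-\delta}\leq 1+u^{s}$ on $\{u\geq 1\}$ and via its distance-integrability on $\{u<1\}$) produces a constant $\Lambda_{\lambda}$ depending only on $\lambda,p,\delta,s,\Omega$ such that $\|u\|_{\infty}\leq\Lambda_{\lambda}$. Second, with this uniform $L^{\infty}$ bound plus the lower bound $u\geq c_{\lambda}d_{\Omega}$ already obtained, the right-hand side satisfies
\[
0\leq\frac{\lambda}{u^{\delta}}+u^{s}\leq\frac{\lambda}{c_{\lambda}^{\delta}}\,d_{\Omega}(x)^{-\delta}+\Lambda_{\lambda}^{s},
\]
which is the growth \eqref{est_dist} of Proposition \ref{giacomoni} with exponent $\delta\in(0,1)$. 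Thus Proposition \ref{giacomoni}, once the initial linear-in-distance bound $u\leq Cd_{\Omega}$ is available, gives $u\in C^{1,\beta}(\bar{\Omega})$ with $\|u\|_{C^{1,\beta}(\bar{\Omega})}\leq\Gamma$ for a $\Gamma$ depending only on the data. Combining this with $u=0$ on $\partial\Omega$ and the mean value theorem finally yields $u(x)\leq\|\nabla u\|_{\infty}d_{\Omega}(x)\leq K_{\lambda}d_{\Omega}(x)$.

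The only non-routine step in the chain is the first, preliminary linear bound $u\leq Cd_{\Omega}$ needed to feed Proposition \ref{giacomoni}: one must verify \eqref{est_u_2} \emph{before} invoking it. My plan for this is a one-sided barrier argument: having the $L^{\infty}$ bound $\|u\|_{\infty}\leq\Lambda_{\lambda}$, let $w\in W_{0}^{1,p}(\Omega)$ solve $-\Delta_{p}w=\lambda(c_{\lambda}d_{\Omega})^{-\delta}+\Lambda_{\lambda}^{s}$, which has a bounded right-hand side outside a neighbourhood of $\partial\Omega$ and the integrable singular behaviour $d_{\Omega}^{-\delta}$ near it, so by standard $p$-Laplace boundary regularity (together with \cite{Giacomoni}) $w\leq Cd_{\Omega}$ in $\Omega$; the weak comparison principle then gives $u\leq w\leq Cd_{\Omega}$. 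This is what I expect to be the main technical obstacle, because the constants must be tracked carefully to depend only on $\lambda,p,\delta,s,\Omega$ and not on the particular solution $u$, which is precisely the content of the independence claim in the statement.
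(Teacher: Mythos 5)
The paper itself offers no proof of this lemma: it is imported verbatim from \cite[Lemma A.7]{Giacomoni}, so the only comparison available is with the argument in that reference. Your lower bound is the standard one and is correct: discard $u^{s}\geq 0$, compare with the unique positive solution of the purely singular problem via Lemma \ref{Cuesta_Takac}, and use the $C^{1}$ regularity and Hopf-type boundary behaviour of that \emph{fixed} barrier to obtain $u\geq c_{\lambda}d_{\Omega}$ with $c_{\lambda}$ manifestly independent of $u$. (One small slip: that barrier solves $-\Delta_{p}v=\lambda v^{-\delta}$ and coincides with a multiple of the $u_{0}$ of \eqref{P2} only when $\delta=\alpha$; for general $\delta$ you should invoke \cite{Canino_Sciunzi_Trombetta} for the auxiliary problem directly. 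This does not affect the argument.)

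The genuine gap is at the very first step of the upper bound: the claim that a ``standard Moser/De Giorgi iteration'' yields $\|u\|_{\infty}\leq\Lambda_{\lambda}$ with $\Lambda_{\lambda}$ depending only on $\lambda,p,\delta,s,\Omega$. Problem \eqref{problem_Giac} allows $s=p^{*}-1$, i.e.\ critical growth, and at the critical exponent the iteration does not close with constants depending only on the data: one needs a Brezis--Kato type bootstrap whose output depends on $\|u\|_{L^{p^{*}}}$, equivalently on the energy of the particular solution, and the hypotheses provide no a priori bound on the energy of an \emph{arbitrary} positive weak solution. Since everything downstream in your argument --- the barrier $w$, the verification of \eqref{est_dist} and \eqref{est_u_2}, the constant $\Gamma$ from Proposition \ref{giacomoni}, hence $K_{\lambda}$ --- is expressed in terms of $\Lambda_{\lambda}$, the asserted independence of $K_{\lambda}$ from $u$ is precisely what remains unproved. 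To repair this you would have to either restrict to $s<p^{*}-1$ (where the iteration is genuinely uniform), or allow the constants to depend on $\|u\|_{\infty}$ (which is how such estimates are typically stated), or supply a separate uniform a priori bound. The remaining steps --- the barrier with right-hand side controlled by $d_{\Omega}^{-\delta}$, the bound $w\leq Cd_{\Omega}$, then Proposition \ref{giacomoni} and the mean value theorem --- are sound once a uniform $\Lambda_{\lambda}$ is in hand.
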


In the course of the proof of Theorems \ref{main result2} and \ref{main result1}, we denote by $\lambda_{1}$ and $\phi_{1}$ the first eigenpair of the $p$-Laplacian, that is,
\begin{equation} \label{u1}
	\left\{\begin{array}{rcll}
		-\Delta_{p} \phi_{1}  & =& \lambda_{1}\phi_{1}^{p-1} & \textup{in }\ \Omega\\
		\phi_{1} & =& 0 &\textup{on }\  \partial \Omega,
	\end{array}\right.
\end{equation}
with $\phi_{1}>0$ satisfying $\left\|\phi_{1}\right\|_{\infty}=1$. Note that we have $\phi_1>0$ in $\Omega$ and $\frac{\partial \phi_1}{\partial \nu}<0$ on $\partial \Omega$, see \cite[Theorem 5]{Vazquez}. Hence, since $\phi_1 \in C^1(\bar{\Omega})$, there are constants $\ell$ and $L, 0<\ell<L$, such that $\ell d_{\Omega}(x)\leq \phi_1(x) \leq L d_{\Omega}(x)$ for all $x \in \Omega$. 

\section{An auxiliary problem} \label{auxiliary problem}

For each $\varepsilon>0$, we consider the auxiliary problem
\begin{equation} \label{P_n}
\left\{\begin{array}{rcll}
		-\Delta_p u & =&  \frac{\lambda}{(|u|+ \varepsilon)^{\alpha}} + f(x,u, \nabla u) &  \textup{in }\ \Omega  \\
		u & > &0  &\textrm{in }\  \Omega   \\
		u & = &0  &\textrm{on }\ \partial\Omega .
	\end{array}
	\right.
\end{equation}

To prove Theorem \ref{main result2} we first show the existence of a solution to the auxiliary problem \eqref{P_n} by applying the Galerkin method. 

In order to do so, we state a useful consequence of Brouwer's fixed point theorem, which was proved in \cite{Araujo-F2}. The statement is a subtle generalization of the classical result, allowing us to work with an arbitrary norm $|\cdot|_m$ in $\R^m$. We denote by $(\cdot,\cdot)$ the usual inner product in $\R^m$.
\begin{lemma}\label{prop1}
Let $F\colon (\R^m, |\cdot|_m) \rightarrow (\R^m, |\cdot|_m)$ be a continuous function such that $\left( F(\xi),\xi\right)\geq 0$ for every $\xi \in \R^m$ with $|\xi|_{m}=R$ for some $R>0$. Then, there exists $z_0$ in the closed ball $\bar{B}^m_r(0)=\{z \in \R^m\,:\, |z|_{m}\leq R\}$ such that $F(z_0)=0$.
\end{lemma}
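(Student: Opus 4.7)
The plan is a textbook Brouwer-fixed-point argument by contradiction, adapted so that the ball is taken with respect to the arbitrary norm $|\cdot|_m$ rather than the Euclidean one. First I would set $K := \{z \in \mathbb{R}^m : |z|_m \leq R\}$ and observe that, since all norms on $\mathbb{R}^m$ are equivalent, $K$ is compact in the Euclidean topology; it is also convex, because any norm ball is convex. Hence Brouwer's fixed point theorem is applicable to any continuous self-map of $K$.

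The core step is the contradiction hypothesis: assume that $F(z) \neq 0$ for every $z \in K$, and define the auxiliary map
$$G : K \to K, \qquad G(z) := -\frac{R}{|F(z)|_m}\, F(z).$$
This $G$ is well defined by the nonvanishing assumption, continuous because $F$ is, and lands on the sphere $\{|z|_m = R\} \subset K$. Brouwer's theorem then produces a fixed point $z_0 \in K$, which automatically satisfies $|z_0|_m = R$.

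I would close the argument by testing the hypothesis at $z_0$. Since $|z_0|_m = R$, the assumption of the lemma gives $(F(z_0), z_0) \geq 0$. On the other hand, using $z_0 = G(z_0)$ and bilinearity of the Euclidean inner product,
$$(F(z_0), z_0) = \left( F(z_0),\ -\frac{R}{|F(z_0)|_m}\, F(z_0) \right) = -\frac{R}{|F(z_0)|_m}\, \|F(z_0)\|_2^{\,2},$$
where $\|\cdot\|_2$ denotes the Euclidean norm. Because $R>0$ and $F(z_0) \neq 0$, the right-hand side is strictly negative, contradicting $(F(z_0), z_0) \geq 0$. Hence $F$ must vanish at some point of $K$, which is the desired $z_0$ of the statement.

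The one conceptual point requiring attention is that Brouwer is being invoked for a ball in a possibly non-Euclidean norm; this is not a real obstacle, because norm equivalence makes $K$ homeomorphic (via the identity map of $\mathbb{R}^m$) to a standard Euclidean closed ball, preserving compactness, convexity, and continuity of $G$. Otherwise the argument is precisely the classical reduction of an "outward pointing" boundary condition to Brouwer's theorem, so I do not anticipate any genuine difficulty beyond bookkeeping with the two norms.
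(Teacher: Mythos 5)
Your argument is correct: the map $G(z)=-\tfrac{R}{|F(z)|_m}F(z)$ is a continuous self-map of the compact, convex set $\{z:|z|_m\le R\}$ (compactness and convexity being unaffected by the choice of norm), its Brouwer fixed point $z_0$ necessarily lies on the sphere $|z|_m=R$, and there the identity $(F(z_0),z_0)=-\tfrac{R}{|F(z_0)|_m}\|F(z_0)\|_2^2<0$ contradicts the hypothesis $(F(z_0),z_0)\ge 0$. The paper itself gives no proof of this lemma, deferring to the cited reference, but the classical argument being invoked there is precisely this reduction of the boundary sign condition to Brouwer's theorem, so your approach coincides with the intended one.
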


\begin{lemma}\label{teo aux}
For any $\varepsilon>0$ the problem \eqref{P_n} admits a positive weak solution $u_{\varepsilon}\in W^{1,p}_0(\Omega)\cap C^{1,\sigma}(\bar{\Omega})$ for each $\lambda >0$ and some $0<\sigma<1$.
\end{lemma}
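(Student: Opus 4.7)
The plan is to apply the Galerkin method with a Schauder basis in $W_0^{1,p}(\Omega)$. Fix such a basis $\{e_j\}_{j\geq 1}$, set $V_m=\mathrm{span}\{e_1,\dots,e_m\}$, and identify $\xi=(\xi_1,\dots,\xi_m)\in \mathbb{R}^m$ with $v_\xi:=\sum_{j=1}^m \xi_j e_j \in V_m$. Equip $\mathbb{R}^m$ with the norm $|\xi|_m:=\|v_\xi\|$ and define $F_m\colon \mathbb{R}^m\to \mathbb{R}^m$ by
\[
F_m(\xi)_j := \int_\Omega |\nabla v_\xi|^{p-2}\nabla v_\xi\cdot\nabla e_j\, dx - \int_\Omega \left[\frac{\lambda}{(|v_\xi|+\varepsilon)^\alpha} + f(x,v_\xi,\nabla v_\xi)\right] e_j\, dx.
\]
Continuity of $F_m$ follows from dominated convergence since $(|v|+\varepsilon)^{-\alpha}\le \varepsilon^{-\alpha}$ and $f$ is continuous with the polynomial growth \eqref{f}.

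To produce a Galerkin solution $u_m\in V_m$ satisfying $F_m(u_m)=0$, I will invoke Lemma \ref{prop1}. Compute
\[
(F_m(\xi),\xi) = \|v_\xi\|^p - \int_\Omega \frac{\lambda v_\xi}{(|v_\xi|+\varepsilon)^\alpha}\, dx - \int_\Omega f(x,v_\xi,\nabla v_\xi)\, v_\xi\, dx.
\]
Using $\lambda|v_\xi|(|v_\xi|+\varepsilon)^{-\alpha}\le \lambda\varepsilon^{-\alpha}|v_\xi|$, the growth condition \eqref{f}, H\"older's inequality (available because $r_2<p$) and the Sobolev embedding $W_0^{1,p}(\Omega)\hookrightarrow L^s(\Omega)$ for $s\in\{r_1+1,\, p/(p-r_2)\}$, the two rightmost integrals are bounded by $C_1\|v_\xi\|+C_2\|v_\xi\|^{r_1+1}+C_3\|v_\xi\|^{r_2+1}$. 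Since $r_1+1,\, r_2+1<p$, there exists $R>0$ such that $(F_m(\xi),\xi)\ge 0$ whenever $|\xi|_m=R$. Lemma \ref{prop1} then yields $u_m\in V_m$ with $F_m(u_m)=0$ and, more importantly, $\|u_m\|\le R$ uniformly in $m$.

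The main obstacle is passing to the limit. By the uniform bound, along a subsequence $u_m\rightharpoonup u_\varepsilon$ weakly in $W_0^{1,p}(\Omega)$, strongly in $L^q(\Omega)$ for all $q<p^*$, and a.e. in $\Omega$. To recover strong convergence of the gradients I test the Galerkin identity with $u_m-\Pi_m u_\varepsilon$, where $\Pi_m$ is the partial-sum projection associated to the Schauder basis, so that $\Pi_m u_\varepsilon\to u_\varepsilon$ strongly in $W_0^{1,p}(\Omega)$. The right-hand side of the Galerkin system is uniformly controlled in a suitable dual Lebesgue space (using $r_2<p-1$ and the $L^\infty$ bound on the singular part), which produces
\[
\int_\Omega |\nabla u_m|^{p-2}\nabla u_m\cdot \nabla(u_m-u_\varepsilon)\, dx \longrightarrow 0.
\]
The $(S_+)$-property of $-\Delta_p$ on $W_0^{1,p}(\Omega)$ then upgrades weak convergence to strong convergence, hence $\nabla u_m\to \nabla u_\varepsilon$ a.e. Vitali's theorem (whose equi-integrability hypothesis is met by \eqref{f} combined with the uniform $W_0^{1,p}$-bound and $r_1,r_2<p-1$) lets me pass to the limit in $f(x,u_m,\nabla u_m)$; the density of $\bigcup V_m$ in $W_0^{1,p}(\Omega)$ then promotes the resulting identity to arbitrary test functions, so that $u_\varepsilon$ is a weak solution of \eqref{P_n}.

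Finally, positivity follows from testing with $u_\varepsilon^-\in W_0^{1,p}(\Omega)$: since the right-hand side of \eqref{P_n} is non-negative, a standard computation yields $\|u_\varepsilon^-\|^p\le 0$, so $u_\varepsilon\ge 0$ a.e.; then $-\Delta_p u_\varepsilon\ge \lambda(|u_\varepsilon|+\varepsilon)^{-\alpha}>0$ together with V\'azquez's strong maximum principle gives $u_\varepsilon>0$ in $\Omega$. A Moser iteration (feasible because $r_1<p-1<p^*-1$) provides $u_\varepsilon\in L^\infty(\Omega)$, so the right-hand side has bounded dependence on $x$ and natural gradient growth of exponent $r_2<p$; the regularity results of DiBenedetto and Lieberman then give $u_\varepsilon\in C^{1,\sigma}(\bar{\Omega})$ for some $\sigma\in(0,1)$.
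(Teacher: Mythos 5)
Your proposal is correct and follows essentially the same route as the paper: Galerkin approximation in a Schauder basis, the Brouwer-type Lemma \ref{prop1} for the finite-dimensional systems, strong convergence via the $(S_+)$-property after testing with $u_m-\Pi_m u_\varepsilon$, and then nonnegativity by testing with $u_\varepsilon^-$ followed by $L^\infty$ and $C^{1,\sigma}$ regularity and the strong maximum principle. The only cosmetic differences are that you pass to the limit in the convection term via Vitali/equi-integrability where the paper invokes continuity of the Nemytskii operator in $L^{p/(p-1)}$, and you obtain the $L^\infty$ bound by Moser iteration where the paper cites Ladyzhenskaya--Ural'tseva.
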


\begin{proof} Fix $\varepsilon>0$ and consider $\mathcal{B}=\{e_1,...,e_m,...\}$ a Schauder basis of $W^{1,p}_0(\Omega)$. For each $m \in \mathbb{N}$ and $\xi=(\xi_1,...,\xi_m)\in \R^m$, define the $m$-dimensional subspace $V_m \coloneqq [e_1,...,e_m]\subset W^{1,p}_0(\Omega)$ and the norm
\begin{align*}
|\xi|_m \coloneqq \left \Vert \sum^m_{j=1} \xi_j e_j \right\Vert_{W^{1,p}_0(\Omega)}.
\end{align*}
It follows that the map $T_m\colon (\R^m, |\cdot|_m) \rightarrow \left(V_m, \Vert \cdot \Vert_{W^{1,p}_0(\Omega)}\right)$ given by $\displaystyle T_m(\xi)=\sum^m_{j=1} \xi_j e_j = u$ is an isometric isomorphism. Consider the function $F\colon\R^{m} \to \R^{m}$, $F(\xi)=(F_1(\xi),F_2(\xi),\dots, F_m(\xi))$,  given by
\[F_j(\xi)=\int_{\Omega} \vert \nabla u \vert^{p-2} \nabla u \nabla e_j-  \lambda\int_{\Omega} \dfrac{e_j}{(\varepsilon+|u|)^{\alpha}} -\int_{\Omega} f(x,u, \nabla u)e_j, \,\,\,1\leq j\leq m.\]

\noindent\textbf{Claim.} $F\colon\R^m\to \R^m$ satisfies the hypotheses of Lemma \ref{prop1}. 

We start showing that $F$ is continuous. Indeed, for $k\in\mathbb{N}$, suppose that $\zeta^k\rightarrow\zeta^0$ in $\R^m$ and consider $u_k=\sum_{i=1}^m\zeta^k_i e_i\in V_m$. For each fixed $j\in\{1,2,\dots,m\}$, note that
\[
F_j(\zeta^k)=\int_{\Omega} \vert \nabla u_k \vert^{p-2} \nabla u_k \nabla e_j-  \lambda\int_{\Omega} \dfrac{e_j}{(\varepsilon+|u_k|)^{\alpha}} -\int_{\Omega} f(x,u_k, \nabla u_k)e_j.
\]

We will only give attention to the term $\int_{\Omega} f(x,u_k, \nabla u_k)e_j$. Since 
\[
u_k=\sum_{i=1}^m\zeta^k_i e_i \to u_0\coloneqq \sum_{i=1}^m\zeta^0_i e_i \quad \mbox{ in } \quad W^{1,p}_0(\Omega),
\]
we obtain $u_k \to u_0$ and $\nabla u_k \to \nabla u_0$, both in $L^p(\Omega)$.

Since $r_1, r_2 < p-1$ it follows from \eqref{f} that
\[
 0 \leq f(x,t, \xi) \leq c_1 +  c_2|t|^{p-1}+c_3|\xi|^{p-1},
\]
in $\Omega\times \mathbb{R}\times \R^m$. The continuity of the Nemytskii operator yields
\begin{equation}\label{conv1}
f(\cdot, u_k,\nabla u_k) \to f(\cdot, u_0,\nabla u_0) \quad \mbox{ in } \quad L^{\frac{p}{p-1}}(\Omega).    
\end{equation}
Hence,
\begin{multline*}
	\left|\int_{\Omega} f(x,u_k, \nabla u_k)e_j -\int_{\Omega} f(x,u_0, \nabla u_0)e_j  \right|  
\end{multline*}
\begin{align}\label{conv2}
& \leq \|f(\cdot,u_k, \nabla u_k) - f(\cdot,u_0, \nabla u_0)\|_{L^{\frac{p}{p-1}}(\Omega)}\|e_j\|_{L^p(\Omega)}  \to 0  \mbox{ as }  n \to \infty.
\end{align}    
Thus, we conclude that $F_j(\zeta^k)\rightarrow F_j(\zeta^0)$, proving the continuity of $F$.

It follows from \eqref{f}, Holder's inequality and the pertinent Sobolev embedding that
$$\langle F(\xi), \xi \rangle \geq \Vert u \Vert^p_{W^{1,p}_0(\Omega)}-  \lambda c_1\Vert u \Vert^{1-\alpha}_{W^{1,p}_0(\Omega)} -c_2 \Vert u \Vert^{r_1+1}_{W^{1,p}_0(\Omega)}- c_3\Vert u \Vert^{r_2+1}_{W^{1,p}_0(\Omega)},
$$
where $c_1, c_2$ and $c_3$ are positive constants independent of $m$ and $u$. Because $r_1, r_2 < p-1$ and $1-\alpha<p$, we obtain that $\langle F(\xi), \xi \rangle$ is coercive, that is, for each $\lambda >0$ there exists $R=R(\lambda)>0$ large enough such that $|\xi|_m \coloneqq \Vert u \Vert_{W^{1,p}_0(\Omega)}=R$ implies $\langle F(\xi), \xi \rangle\geq 0$.

As a consequence of Lemma \ref{prop1}, there exists $z_0 \in \R^{m}$ with $|z_0|\leq R$ such that $F(z_0)=0$. Taking into account the isometric isomorphism $T_m: \R^m \to V_m$, there exist $u_m \in V_m$ with $\|u_m\|_{W^{1,p}_0 (\Omega)}\leq R$ such that, for all $v\in V_m$,
\begin{equation}\label{eq15}
	\int_{\Omega}|\nabla u_m|^{p-2}\nabla u_m\nabla v = \lambda\int_{\Omega} \dfrac{v}{(\varepsilon+|u_m|)^{\alpha}} + \int_{\Omega} f(x,u_m,\nabla u_m)v.
\end{equation}

Since $R$ does not depend on $m$, the sequence $(u_m)$ is bounded in $W^{1,p}_0(\Omega)$. Thus, for a subsequence, there exists $u \in W^{1,p}_0(\Omega)$ such that \begin{equation*}
	u_m \rightharpoonup u \,\,\, \mbox{weakly in} \,\,\, W^{1,p}_0(\Omega).
\end{equation*}

Consequently, we conclude that $u_m\to u$ in $L^p(\Omega)$ and $u_m(x)\to u(x)$ a.e. in $\Omega$.
Observe also that 
\begin{equation}\label{eq30}
	\|u\|_{W^{1,p}_0(\Omega)} \leq \liminf_{m \to \infty}\|u_m\|_{W^{1,p}_0(\Omega)} \leq R.
\end{equation}

We now claim that
\begin{equation}\label{eq16.2}
	u_m \to u \,\,\, \mbox{in} \,\,\, W^{1,p}_0(\Omega).
\end{equation}

Since $\mathcal{B}$ is a Schauder basis of $W^{1,p}_0(\Omega)$, there exists a unique sequence $(\alpha_n)_{n\geq 1}$ in $\mathbb{R}$  such that $u=\sum _{j=1}^{\infty}\alpha_j e_j $. Thus, as $m\to\infty$,
\begin{equation}\label{166}
	\phi_m = \sum_{j=1}^m \alpha _j e_j \rightarrow u \,\, \mbox{ in } W_0^{1,p}(\Omega).
\end{equation}

Considering the test function $(u_m-\phi_m)\in V_m$ in \eqref{eq15} yields
\begin{multline*}
	\int_{\Omega} \vert \nabla u_m \vert^{p-2} \nabla u_m \nabla (u_m - \phi_m) \hfill
\end{multline*}
\begin{align*}
& = \lambda\int_{\Omega} \dfrac{(u_m - \phi_m)}{(\varepsilon+|u_m|)^{\alpha}} + \int_{\Omega} f(x,u_m,\nabla u_m)(u_m - \phi_m)\\
&\leq \dfrac{\lambda}{\varepsilon^\alpha} \Vert u_m - \phi_m \Vert_{L^{1}(\Omega)}+ \tilde{c}_1\Vert u_m \Vert_{W^{1,p}_0(\Omega)}^{r_2}\Vert u_m - \phi_m \Vert_{L^{r_2+1}(\Omega)} + \\ & \quad+ \tilde{c}_2\Vert u_m \Vert_{W^{1,p}_0(\Omega)}^{r_1}\Vert u_m - \phi_m \Vert_{L^{r_1+1}(\Omega)} \rightarrow 0 \quad \text{as} \quad m \to \infty.
\end{align*}
where we use \eqref{f}, Holder's inequality, the boundness of $\Vert u_m \Vert_{W^{1,p}_0(\Omega)}$, \eqref{eq16.2} and \eqref{166}.
Thus, we obtain
\[
\lim_{m\rightarrow\infty} \displaystyle\int_{\Omega}|\nabla u_m|^{p-2}\nabla u_m\nabla (u_m-u)=0. 
\]
Now it is sufficient to apply the $(S_+)$ property of $-\Delta_p$ (see \cite[Proposition 3.5]{Montreanu-M-P}) to obtain \eqref{eq16.2}.

If $k \in \mathbb{N}$, then for every $m\geq k$ and $v_k\in V_k$, we obtain
\begin{align*} 
\int_{\Omega} \vert \nabla u_m \vert^{p-2} \nabla u_m \nabla v_k=  \lambda\int_{\Omega} \dfrac{v_k}{(\varepsilon+|u_m|)^{\alpha}} + \int_{\Omega} f(x,u_m\nabla u_m)v_k.
\end{align*}

Since $[V_k]_{k \in \mathbb{N}}$ is dense in $W^{1,p}_0(\Omega)$ we conclude that, for all $v \in W_0^{1,p}(\Omega)$,
\begin{align*} 
\int_{\Omega} \vert \nabla u_m \vert^{p-2} \nabla u_m \nabla v=  \lambda\int_{\Omega} \dfrac{v}{(\varepsilon+|u_m|)^{\alpha}}  + \int_{\Omega} f(x,u_m, \nabla u_m)v.
\end{align*}
Proceeding as in \eqref{conv1} and \eqref{conv2} we obtain
\begin{equation*}
f(\cdot, u_m,\nabla u_m) \to f(\cdot, u,\nabla u) \quad \mbox{ in } \quad L^{\frac{p}{p-1}}(\Omega)    
\end{equation*}
and
\begin{multline*}
\left|\int_{\Omega} f(x,u_m, \nabla u_m)v -\int_{\Omega} f(x,u, \nabla u)v  \right|
\end{multline*}
\begin{align*}
& \leq \|f(\cdot,u_m, \nabla u_m) - f(\cdot,u, \nabla u)\|_{L^{\frac{p}{p-1}}(\Omega)}\|v\|_{L^p(\Omega)} \to 0\ \mbox{ as }\ m \to \infty.
\end{align*}    
Therefore,
\begin{align*}
    \int_{\Omega}  f(x,u_m,\nabla u_m) v \rightarrow \int_{\Omega}  f(x,u,\nabla u) v, \quad \forall v \in W_0^{1,p}(\Omega).
\end{align*}

As before, we conclude that 
\begin{align*}
    \int_{\Omega} \vert \nabla u_m \vert^{p-2}\nabla u_m \nabla v \rightarrow \int_{\Omega} |\nabla u \vert^{p-2}\nabla u \nabla v,
\end{align*}
what yields, for all $v \in W_0^{1,p}(\Omega)$,
\begin{equation*} 
\int_{\Omega} \vert \nabla u \vert^{p-2} \nabla u \nabla v=  \lambda\int_{\Omega} \dfrac{v}{(\varepsilon+|u|)^{\alpha}}+ \int_{\Omega} f(x,u,\nabla u)v.
\end{equation*}

Furthermore, since $u^- \in W_0^{1,p}(\Omega)$, we have
\begin{align*}  
\int_{\Omega} \vert \nabla u \vert^{p-2} \nabla u \nabla u^- =  \lambda\int_{\Omega} \dfrac{u^-}{(\varepsilon+|u|)^{\alpha}}+  \int_{\Omega} f(x,u,\nabla u)u^-,
\end{align*}
from what follows
\begin{align*}
    -\Vert u^- \Vert_{W_0^{1,p}(\Omega)} \geq  \lambda\int_{\Omega/\{u(x)>0\}} \dfrac{u^-}{(\varepsilon+|u|)^{\alpha}}  \geq 0.
\end{align*}
Then $u_- \equiv 0$ a.e. in $\Omega$. Therefore,
\begin{equation*}
\int_{\Omega} \vert \nabla u \vert^{p-2} \nabla u \nabla v=  \lambda\int_{\Omega} \dfrac{v}{(\varepsilon+u)^{\alpha}}  + \int_{\Omega} f(x,u,\nabla u)v, \hspace{0.3cm}  \forall v \in W_0^{1,p}(\Omega).
\end{equation*}
The equation in \eqref{P_n} guarantees that $u\not\equiv0$. By \cite[Theorem 7.1]{Lady} we infer
that $u\in L^\infty(\Omega)$. Furthermore, regularity up
to the boundary ensures that $u\in C^{1,\sigma}(\bar{\Omega})$ with some
$\sigma\in(0,1)$, see \cite[Theorem 1]{Lieberman}. It results from the strong maximum principle that $u>0$ in
$\Omega$, completing the proof that $u$ is a
solution of problem \eqref{P_n}. We are done. \end{proof}

\section{Proof of Theorem \ref{main result2}} \label{teorema1}

\begin{proof} We consider first the existence of a solution by taking successively $\varepsilon=1/n$, $n\in \mathbb{N}$.  According to Lemma \ref{teo aux}, for each $\lambda >0$, there exists a solution $u_n \in C^{1,\sigma}(\bar{\Omega})$ of the auxiliary problem \eqref{P_n}, that is, for all $v \in W_0^{1,p}(\Omega)$ we have
\begin{align} \label{eqq3}
\int_{\Omega} \vert \nabla u_n \vert^{p-2} \nabla u_n \nabla v=  \lambda\int_{\Omega} \dfrac{v}{(\frac{1}{n}+u_n)^{\alpha}}  + \int_{\Omega} f(x,u_n, \nabla u_n)v,
\end{align}
and, in particular, \eqref{eqq3} is valid for all $v \in C^1_c(\Omega)$.

Taking into account \eqref{eq30} and proceeding as in the proof of \eqref{eq16.2}, we obtain a subsequence $(u_n)$ such that
\begin{equation}\label{eq16.2.2}
	u_n \to u \ \mbox{ in } \ W^{1,p}_0(\Omega).
\end{equation}

In order to handle the first integral on the right side of \eqref{eqq3}, let us consider $w= \lambda^{\frac{1}{p-1-\alpha}}u_0$, where $u_0$ is the unique solution of \eqref{P2}. Of course, $w$ is a \emph{positive} solution to the problem
\begin{equation}\label{P6} 
\left\{
\begin{array}{rcll}
-\Delta_p w & =& \frac{\lambda}{w^{\alpha}}  &\textrm{in }\ \Omega \\
w & =& 0  &\textrm{on }\ \partial\Omega.
\end{array}
\right.
\end{equation}
Since $u_n$ is a solution of \eqref{P_n}, we have
\begin{align*}
 -\Delta_p\left( u_n +\frac{1}{n}\right) &= -\Delta_p u_n \geq \frac{\lambda}{\left(u_n+ \frac{1}{n}\right)^{\alpha}}
\end{align*}
and $(u_n+1/n)>0$ on $\partial\Omega$. In particular, $u_n +\frac{1}{n}$ is a super-solution of \eqref{P2} and, as a consequence of Lemma \ref{Cuesta_Takac}, we obtain $u_n +\frac{1}{n} \geq w$. 

Let $\phi_1$ be the first eigenfunction of the $p$-Laplacian, as in \eqref{u1}. For each $\beta \in \left(0, \lambda_1^{-1/(p-1-\alpha)}\right)$ we have that $\beta \phi_1$ satisfies 
\begin{equation*} 
\left\{
\begin{array}{rcll}
-\Delta_p (\beta \phi_1) & \leq&\frac{1}{(\beta \phi_1)^{\alpha}}  &\textrm{in }\ \Omega \\
\beta \phi_1 & >& 0 &\textrm{in }\ \Omega \\
\beta \phi_1 & =& 0  &\textrm{on }\ \partial\Omega.
\end{array}
\right.
\end{equation*}
In particular, $\beta \phi_1$ is a sub-solution of \eqref{P2}. Another application of Lemma \ref{Cuesta_Takac} yields $u_0 \geq \beta \phi_1$. 

Since there is a constante $\ell >0$ such that $\ell d_{\Omega} \leq \phi_1$, we obtain 
\begin{equation}\label{u2}
    u_0 \geq k d_{\Omega},
\end{equation}
where $k= \ell \beta$. Therefore, 
\begin{equation*}
    w= \lambda^{\frac{1}{p-1-\alpha}}u_0 \geq k_1 d_{\Omega},
\end{equation*}
where $k_1= k\lambda^{\frac{1}{p-1-\alpha}}$. Consequently, $u_n +\frac{1}{n}\geq w \geq k_1d_{\Omega}$, what implies 
\begin{equation*} 
  \dfrac{|v|}{\left(u_n +\frac{1}{n}\right)^{\alpha}} \leq \dfrac{|v|}{\left(k_1d_{\Omega}\right)^{\alpha}}
\end{equation*}
for all $v \in C_0^1(\Omega)$.

It follows from \eqref{integral_dist} that  $\dfrac{v}{\left(k_1d_{\Omega}\right)^{\alpha}}$ is integrable.  Since $u_n + \frac{1}{n} \to u$ a.e in $\Omega$, we conclude that $u \geq k_1d_{\Omega}>0$ a.e. in $\Omega$ and 
\[
 \dfrac{v}{(\frac{1}{n}+u_n)^{\alpha}} \to  \dfrac{v}{u^{\alpha}}\ \textrm{ a.e. in }\ \Omega
\]
as $n\to\infty$.

Now, an application of Lebesgue's dominated convergence theorem  yields
\begin{equation} \label{convergencia}
\int_{\Omega} \dfrac{v}{(\frac{1}{n}+u_n)^{\alpha}} \to \int_{\Omega} \dfrac{v}{u^{\alpha}}\ \textup{ as }\  n \to +\infty
\end{equation}

Since $C^1_0(\Omega)$ is dense in $W^{1,p}_0(\Omega)$ and $u_n$ solves \eqref{eqq3}, it follows from \eqref{eq16.2.2} and \eqref{convergencia} as $n\to \infty$ that, for all $v \in W^{1,p}_0(\Omega)$,
\begin{equation*}
\int_{\Omega} |\nabla u |^{p-2} \nabla u \nabla v=  \lambda\int_{\Omega} \dfrac{v}{u^{\alpha}} + \int_{\Omega} f(x,u, \nabla u)v, \text{  } \forall v \in W^{1,p}_0(\Omega). 
\end{equation*} 
The proof is complete.
\end{proof}

\section{Proof of Theorem \ref{main result1}}\label{teorema2}
\begin{proof} Let us consider $\lambda$ and $M$ satisfying Proposition \ref{lem3}. Define the set
\begin{equation*}
\mathcal{A}:=\left\{v \in W^{1,p}_0(\Omega): \lambda u_0 \leq v \leq Mu_0,\text{ } \Vert \nabla v \Vert_{\infty} \leq M \right\} .
\end{equation*}

We will prove that for each $v \in \mathcal{A}$ there exists an unique positive solution $u$ of the problem
\begin{equation} \label{aux.problem}
\left\{
\begin{array}{rcll}
-\Delta_{p} u & =&  \lambda v^{-\alpha} + f(x,v, \nabla v)&\textrm{in }\ \Omega \\
u & >&0 &\textrm{in }\ \Omega \\
u & =& 0 &\textrm{on }\ \partial \Omega .
\end{array}
\right.
\end{equation}

We know that there are positive constants $\ell$ and $C_2$ such that 
\[\ell d_\Omega(x) \leq \phi_1(x) \leq C_2u_0(x),
\]
thus yielding, for $c=\ell/C_2$,
\begin{align} \label{d1}
cd_\Omega(x) \leq u_0(x).
\end{align} 

If $p'=\frac{p}{p-1}$ and $q> \max\{N,p'\}$, by our hypothesis we have $0<\alpha q< 1$. Then
\begin{align*}
    \dfrac{1}{u_0(x)^{q\alpha}} \leq \dfrac{1}{[cd_\Omega(x)]^{q\alpha}}.
\end{align*}
Thus, it follows from \eqref{integral_dist} that
\begin{align*}
    \int_{\Omega} \left(\dfrac{1}{u_0(x)^{\alpha}}\right)^q = \int_{\Omega} \dfrac{1}{u_0(x)^{q\alpha}} \leq \int_{\Omega} \dfrac{1}{[cd_\Omega(x)]^{q\alpha}} < \infty,
\end{align*}
and, since $\lambda u_0 \leq v$,
\begin{align} \label{equ}
    \left( \int_{\Omega} \left| \dfrac{\lambda}{v^{\alpha}}+ f(x,v,\nabla v)\right|^q\right)^{\frac{1}{q}} 
    &\leq \left( \int_{\Omega} \left( \left| \dfrac{\lambda}{v^{\alpha}} \right|+ a|v|^{r_1}+ b|\nabla v|^{r_2}  \right)^q\right)^{\frac{1}{q}} \nonumber\\
    &\leq \left( \int_{\Omega} \left( \dfrac{\lambda^{1-\alpha}}{|u_0|^{\alpha}}+ aM^{r_1}\Vert u_0\Vert_{\infty}^{r_1}+ b M^{r_2} \right)^q\right)^{\frac{1}{q}} \nonumber\\
     &\leq \left( \int_{\Omega} \left( \dfrac{\lambda^{1-\alpha}}{|u_0|^{\alpha}}+\dfrac{\lambda^{1-\alpha}}{|u_0|^{\alpha}} \right)^q\right)^{\frac{1}{q}}= \left( \int_{\Omega} \left( \dfrac{2\lambda^{1-\alpha}}{|u_0|^{\alpha}}\right)^q\right)^{\frac{1}{q}}\nonumber\\
     &= 2\lambda^{1-\alpha} \left\Vert \dfrac{1}{u_0^{\alpha}}\right\Vert_{L^q(\Omega)} < \infty.
\end{align}

We conclude that $\dfrac{\lambda}{v^{\alpha}}+ f(x,v,\nabla v) \in L^q(\Omega)$ and, since $q>p'$, it follows that $$\dfrac{\lambda}{v^{\alpha}}+ f(x,v,\nabla v) \in L^{p'}(\Omega).$$ 
By applying \cite[Theorem 8]{Dinca}, we obtain the existence and uniqueness of the solution $u$. 

We now show that $u\in \mathcal{A}$, proving first that $\lambda u_0\leq u\leq Mu_0$. In fact, let $v \in \mathcal{A}$. Proposition \ref{lem3} guarantees that
\begin{align*}
    -\Delta_p u\geq \dfrac{\lambda}{v^{\alpha}} \geq \dfrac{\lambda}{M^{\alpha}u_0^{\alpha}} = \dfrac{\lambda}{M^{\alpha}}\dfrac{1}{u_0^{\alpha}} \geq \lambda^{p-1} \dfrac{1}{u_0^{\alpha}} = \lambda^{p-1} (-\Delta_p u_0)= -\Delta_p (\lambda u_0).
\end{align*}
The comparison principle imply that $u \geq \lambda u_0$.

On the other hand, it follows from Proposition \ref{lem3} and \eqref{f} that 
\begin{align*}
-\Delta_p u &=\dfrac{\lambda}{v^{\alpha}} + f(x,v,\nabla v) \leq \dfrac{\lambda}{\lambda^{\alpha}u_0^{\alpha}} + a|v|^{r_1}+b |\nabla v|^{r_2}\\
& \leq \dfrac{\lambda^{1-\alpha}}{u_0^{\alpha}} + aM^{r_1}\Vert u_0 \Vert_{\infty} + b M^{r_2} \leq \dfrac{\lambda^{1-\alpha}}{u_0^{\alpha}} + \dfrac{\lambda^{1-\alpha}}{\Vert u_0 \Vert_{\infty}^{\alpha}} \\
&\leq \dfrac{2\lambda^{1-\alpha}}{u_0^{\alpha}} = 2 \lambda^{1-\alpha} (-\Delta_p u_0) = -\Delta_p (2^{\frac{1}{p-1}}\lambda^{\frac{1-\alpha}{p-1}} u_0).
\end{align*}
The comparison principle and Proposition \ref{lem3} imply that $u \leq 2^{\frac{1}{p-1}}\lambda^{\frac{1-\alpha}{p-1}} u_0 \leq M u_0$, that is, 
\begin{equation}\label{cond_invariance}
\lambda u_0 \leq u \leq M u_0.
\end{equation}

We will now show $\Vert \nabla u \Vert_{\infty} \leq  M$. If $N=2$, since $\dfrac{\lambda}{v^{\alpha}}+ f(x,v,\nabla v) \in L^q(\Omega)$, it follows from Proposition \ref{Prop1} that
\begin{align*}
    \Vert \nabla u \Vert^{p-1}_{\infty} \leq c_p\left\Vert \dfrac{\lambda}{v^{\alpha}}+ f(\textbf{ . },v,\nabla v) \right\Vert_{L^q(\Omega)} \leq c_p 2\lambda^{1-\alpha} \left\Vert \dfrac{1}{u_0^{\alpha}}\right\Vert_{L^q(\Omega)},
\end{align*}
and therefore
\[
\Vert \nabla u \Vert_{\infty} \leq \tilde{c_p} 2^{\frac{1}{p-1}}\lambda^{\frac{1-\alpha}{p-1}},
\]
where $\tilde{c_p}= c_p^{\frac{1}{p-1}}$.

If $N \geq 3$, since $q>N$, we have $L^q(\Omega) \subset L^{N,1}(\Omega)$, allowing us to conclude that $\dfrac{\lambda}{v^{\alpha}}+ f(x,v,\nabla v) \in L^{N,1}(\Omega)$. So, \eqref{equ} yields 
\begin{align*}
\left\Vert \dfrac{\lambda}{v^{\alpha}}+ f(\textbf{ . },v,\nabla v) \right\Vert_{L^{N,1}(\Omega)} \leq \overline{c} \left\Vert \dfrac{\lambda}{v^{\alpha}}+ f(\textbf{ . },v,\nabla v) \right\Vert_{L^q(\Omega)} <\infty.
\end{align*}

It follows from Proposition \ref{Prop2} that  
\begin{align*}
    \Vert \nabla u \Vert^{p-1}_{\infty} &\leq C_p\left\Vert \dfrac{\lambda}{v^{\alpha}}+ f(\textbf{ . },v,\nabla v) \right\Vert_{L^{N,1}(\Omega)} \\ &\leq C_p\, \bar{c} \left\Vert \dfrac{\lambda}{v^{\alpha}}+ f(\textbf{ . },v,\nabla v) \right\Vert_{L^q(\Omega)} &\leq C_p\, \bar{c}\, 2\lambda^{1-\alpha} \left\Vert \dfrac{1}{u_0^{\alpha}}\right\Vert_{L^q(\Omega)}
    \end{align*}
and we conclude that
$$\Vert \nabla u \Vert_{\infty} \leq \hat{C_p} 2^{\frac{1}{p-1}}\lambda^{\frac{1-\alpha}{p-1}},$$
where $\hat{C_p}= \left(C_p\overline{c}\right)^{\frac{1}{p-1}}$. 

Thus, by considering $\Tilde{C_p}=\max \{\Tilde{c_p}, \hat{C_p}\}$, we have for all $N\geq 2$ 
\begin{align*}
    \Vert \nabla u \Vert_{\infty} \leq \Tilde{C_p} 2^{\frac{1}{p-1}}\lambda^{\frac{1-\alpha}{p-1}}.
\end{align*}

Now, choosing $\lambda$ so that $0< \lambda< A^*$, where
\begin{align*}
A^* \coloneqq \min\left\{\dfrac{1}{2^{\frac{r_1}{(1-\alpha)(r_1+1-p)}}\left(2a\Vert u_0 \Vert_{\infty}^{r_1+\alpha}\right)^{\frac{p-1}{(1-\alpha)(r_1+1-p)}}},\dfrac{1}{2^{\frac{r_2}{(1-\alpha)(r_2+1-p)}}\left(2b\Vert u_0 \Vert_{\infty}^{\alpha}\right)^{\frac{p-1}{(1-\alpha)(r_2+1-p)}}},  \right. \\
 \dfrac{1}{2^{\frac{\alpha}{\alpha(1-\alpha)+(p-1)(p-2)}}}, \dfrac{1}{\left(\tilde{C_p}2^{\frac{1}{p-1}}\right)^{\frac{r_1(p-1)}{(1-\alpha)(r_1+1-p)}}\left(2a\Vert u_0 \Vert_{\infty}^{r_1+\alpha}\right)^{\frac{p-1}{(1-\alpha)(r_1+1-p)}}},\\
\left. \dfrac{1}{\left(\tilde{C_p}2^{\frac{1}{p-1}}\right)^{\frac{r_2(p-1)}{(1-\alpha)(r_2+1-p)}}\left(2b\Vert u_0 \Vert_{\infty}^{\alpha}\right)^{\frac{p-1}{(1-\alpha)(r_2+1-p)}}}, \dfrac{1}{\left(\tilde{C_p}2^{\frac{1}{p-1}}\right)^{\frac{\alpha(p-1)}{\alpha(1-\alpha)+(p-1)(p-2)}}}, 1 \right\},
\end{align*}
we obtain $M=M(\lambda)$ such that
\[
\tilde{C_p}2^{\frac{1}{p-1}}\lambda^{\frac{1-\alpha}{p-1}}\leq M \leq \min\Bigg\{\dfrac{\lambda^{\frac{1-\alpha}{r_1}}}{\left(2a\Vert u_0 \Vert_{\infty}^{r_1+\alpha}\right)^{\frac{1}{r_1}}}, \dfrac{\lambda^{\frac{1-\alpha}{r_2}}}{\left(2b\Vert u_0 \Vert_{\infty}^{\alpha}\right)^{\frac{1}{r_2}}}, \lambda^{\frac{2-p}{\alpha}}\Bigg\}. 
\]
Thus
\begin{align*}
\Vert \nabla u \Vert_{\infty} \leq \tilde{C_p} 2^{\frac{1}{p-1}}\lambda^{\frac{1-\alpha}{p-1}} \leq M.
\end{align*}

\noindent\textbf{Claim.} We have $u \in C^{1, \beta}(\bar{\Omega})$ for some $0<\beta<1$, uniformly with respect to $v \in \mathcal{A}$.\vspace*{.2cm}

In order to prove the Claim, we apply Proposition \ref{giacomoni}. Indeed, since each solution $U$ of \eqref{problem_Giac} is a super-solution of \eqref{P6}, the comparison principle (Lemma \ref{Cuesta_Takac}) yields $w=\lambda^{\frac{1}{p-1+\alpha}} u_0\leq U$.

By Lemma \ref{lemma_Gia2} there is a positive constant $K_{\lambda}$ such that $U\leq K_{\lambda}d_\Omega$, consequently, $u_0 \leq K_{\lambda}d_\Omega$. Moreover, by \eqref{u2} there is a positive constante $k$ such that $u_0 \geq k d_\Omega$.  Furthermore, since 
\begin{align*} 
\dfrac{\lambda}{v^{\alpha}}+ f(x,v,\nabla v) &\leq \dfrac{\lambda}{v^{\alpha}} + a|v|^{r_1}+ b|\nabla v|^{r_2}  \\
&\leq  \dfrac{\lambda^{1-\alpha}}{|u_0|^{\alpha}}+ aM^{r_1}\Vert u_0\Vert_{\infty}^{r_1}+ b M^{r_2} \leq\dfrac{2\lambda^{1-\alpha}}{|u_0|^{\alpha}}\\
&\leq \dfrac{K_1}{d_\Omega^{\alpha}} = K_1d_\Omega^{-\alpha} \text{ a.e in } \Omega, 
\end{align*}
where $K_1=\frac{2\lambda^{1-\alpha}}{k^{\alpha}}$, as a consequence of \eqref{d1}. Moreover, \eqref{cond_invariance} implies that 
\begin{align*}
    u \leq M u_0 \leq MK_{\lambda}d_\Omega.
\end{align*}
So, Proposition \ref{giacomoni} guarantees that the solution of \eqref{aux.problem} satisfies $u \in C^{1, \beta}(\bar{\Omega})$ for some $0<\beta<1$.

As consequence of the previous arguments, the operator
$$
\begin{array}{rll}
T: \mathcal{A} & \longrightarrow & W^{1, p}_0(\Omega)  \\
v & \longmapsto & u,
\end{array}
$$
is well-defined, $u$ being the unique positive solution of \eqref{aux.problem}. Moreover, $T$ is continuous and compact. In fact, let $(v_n)_{n \in \mathbb{N}} \subset \mathcal{A}$ be a bounded sequence in $W^{1,p}_0(\Omega)$ such that $T(v_n)=u_n.$ Since 
    \begin{align*} 
    & \dfrac{\lambda}{v_n^{\alpha}}+ f(x,v_n,\nabla v_n) \leq \dfrac{K_1}{d_\Omega^{\alpha}} = K_1d_\Omega^{-\alpha}, \text{ for all } n \in \mathbb{N},
\end{align*}
arguments like the previous ones and Proposition \ref{giacomoni} yield that $$\Vert u_n \Vert_{ C^{1,\beta}(\bar{\Omega})} \leq \Gamma$$ or, equivalently,
\begin{equation*}
 \Vert T(v_n) \Vert_{ C^{1,\beta}(\bar{\Omega})} \leq \Gamma. 
\end{equation*}
Hence, $\mathcal{A}$ is a equicontinuous subset. Therefore, there is a convergent subsequence of $(T(v_n))_{n \in \mathbb{N}}$ in $C^1(\bar{\Omega})$ which, in particular, is convergent in $W^{1,p}_0(\Omega)$. 
Thus, since $\mathcal{A}$ is a bounded, convex set invariant under $T$, it follows from Schauder's Fixed Point Theorem the existence of a fixed point $u_\lambda$ for $T$. Of course, the fixed point $u_lambda$ satisfies \eqref{ps}, since $-\Delta_{p} u=\lambda u^{-\alpha} + f(x,u, \nabla u)$. 
\end{proof}

\subsection*{Availability of data and materials}
This declaration is not applicable.
\subsection*{Competing interests:}
The authors declare that they have no conflict of interests.
\subsection*{Authors' contributions}
All authors contributed equally to the article.

\end{document}